\documentclass[11pt,reqno,oneside]{amsart}

\usepackage{amsmath, amsthm, amssymb}

\usepackage{tensor}
\usepackage{bbm}
\usepackage{geometry}
\usepackage{soul}
\usepackage[pdftex]{graphicx}
\usepackage{float}
\usepackage{subfig}
\usepackage[english]{babel}
\usepackage{tikz} 
\usepackage{setspace}
\usetikzlibrary{arrows,decorations.pathreplacing,decorations.markings,shapes.geometric}
\tikzset{
    bt/.style={draw=blue,thick},
    ns/.style={circle,draw=blue,fill=blue, inner sep=0pt, minimum size=2mm},
    string/.style={draw=#1, postaction={decorate}, decoration={markings,mark=at position .45 with {\arrow[blue]{triangle 60}}}},
    doublestring/.style={draw=#1, postaction={decorate}, decoration={markings, mark=at position .7 with {\arrow[blue]{triangle 60}}, 
    mark=at position .3 with {\arrowreversed[blue]{triangle 60}}}},
    costring/.style={draw=#1, postaction={decorate}, decoration={markings,mark=at position .55 with {\arrow[draw=#1]{<}}}},
    arr/.style={string=blue, thick},
    doublearr/.style={doublestring=blue, thick},
    lin/.style={blue},
    dlin/.style = {blue, dashed, thick},
    dot/.style={circle,draw=#1,fill=#1,inner sep=1pt},
}

\usepackage{dsfont}
\usepackage{color}
\usepackage[color,matrix,arrow]{xy}
\graphicspath{{graphics/}}
\geometry{a4paper, textwidth=15cm, textheight=25cm}
\numberwithin{equation}{section}
\usepackage{fancyhdr}
\pagestyle{fancy} 
\fancyhf{} 
\fancyhead[L]{} 
\fancyhead[C]{} 
\fancyhead[R]{\thepage} 

\theoremstyle{definition}

\newtheorem{Thm}{Theorem}[section]
\newtheorem{Cor}{Corollary}[section]
\newtheorem{Prop}{Proposition}[section]
\newtheorem{Lemma}{Lemma}[section]
\newtheorem{Rem}{Remark}[section]

\newtheorem{Not}{Notation}[section]

\newcommand{\sh}[1]{\mathcal{#1}}

\DeclareMathOperator{\Imag}{Im}

\DeclareMathOperator{\Uea}{\mathcal{U}}

\DeclareMathOperator{\SL}{SL}

\DeclareMathOperator{\C}{\mathbb{C}}

\DeclareMathOperator{\Z}{\mathbb{Z}}

\DeclareMathOperator{\Hom}{Hom}

\DeclareMathOperator{\codim}{codim}

\DeclareMathOperator{\Gm}{\mathbb{G}_m}
\DeclareMathOperator{\Ga}{\mathbb{G}_a}


\DeclareMathOperator{\Aff}{\mathbb{A}}

\DeclareMathOperator{\Lsimple}{L}
\DeclareMathOperator{\MVerma}{M}

\DeclareMathOperator{\RVerma}{R}

\usepackage[pdftex,
bookmarks=true,
bookmarksnumbered=true,
hypertexnames=false,
colorlinks = true,
linkcolor = black,
citecolor = black,
urlcolor = black]{hyperref}

\title{On a Gelfand-Tsetlin representation of $\mathfrak{sl}_3$ in the space of sections of
a  local system with two monodromy parameters}

\begin{document}

\author{Claude Eicher}
\address{SWITZERLAND}
\email{claudeeicher@gmail.com}
\date{\today}
\maketitle

\tableofcontents

\setstretch{1.4}

\begin{abstract}
We construct a Gelfand-Tsetlin representation of $\mathfrak{sl}_3$
in the space of sections of a local system. The local system lives on an open
part of the flag variety given by the intersection of three translates of the big cell
and has two complex monodromy parameters. We analyze the structure of
this representation. 
\end{abstract}

\section{Setup}
We consider the linear algebraic group $\SL_3$ over $\C$.
We fix a maximal torus $T$ and a Borel subgroup $B \supseteq T$ of it.
We denote by $U$ the unipotent radical of $B$. 
We have the Lie algebra $\mathfrak{sl}_3$ of $\SL_3$ and the triangular decomposition
$\mathfrak{sl}_3 = \mathfrak{n}^- \oplus \mathfrak{t}\oplus \mathfrak{n}
= \mathfrak{n}^- \oplus \mathfrak{b}$.
We denote by $\mathfrak{t}^* = \Hom_{\C}(\mathfrak{t},\C)$ the weights.
We fix a choice of root vectors 
$e_1, e_2, e_{12} = [e_1, e_2]$
associated to the roots $\alpha_1, \alpha_2, \alpha_1+\alpha_2$
of $\mathfrak{n}$
and $f_1, f_2, f_{12} = -[f_1, f_2]$ associated to the
roots $-\alpha_1,-\alpha_2, -\alpha_1-\alpha_2$ of $\mathfrak{n}^-$
such that the usual commutation relations are satisfied. 
The simple coroots are denoted by $h_1, h_2$. The Weyl vector 
is $\rho = \alpha_1+\alpha_2$. The simple reflections 
of the Weyl group are denoted by $s_1$ and $s_2$
and the longest element is $w_{\circ} = s_1 s_2 s_1$. 
We consider the flag variety $\SL_3/B$ of $\SL_3$. 
We have an isomorphism of varieties
\begin{align}\label{eq:Phi}
\begin{split}
& \Phi: U_{\alpha_1} \times U_{\alpha_2} \times U_{\alpha_1+\alpha_2}
\xrightarrow{\cong} X_{w_{\circ}} =U w_{\circ}B\\
& (e^{x_{\alpha_1}e_1}, e^{x_{\alpha_2}e_2},
e^{x_{\alpha_1+\alpha_2}e_{12}}) \mapsto e^{x_{\alpha_1}e_1}e^{x_{\alpha_2}e_2}e^{x_{\alpha_1+\alpha_2}e_{12}} w_{\circ} B
\end{split}
\end{align}
parametrizing the big cell $X_{w_{\circ}}$ in $\SL_3/B$, which is the open $B$-orbit in that variety. 
Here $U \supseteq U_{\alpha} \cong \Ga$ denotes the root subgroup associated to the root $\alpha$. 

\section{Local system $\sh{L}_{\mu_1, \mu_2}$ and its space of sections $M_{\mu_1, \mu_2}$}
We define $O = X_{w_{\circ}} \cap s_1 X_{w_{\circ}} \cap s_2 X_{w_{\circ}}$. 
\eqref{eq:Phi} restricts to an isomorphism 
\begin{align*}
\Phi: \Gm^2 \times \Aff^1 = (U_{\alpha_1}\setminus 1)\times
(U_{\alpha_2} \setminus 1) \times U_{\alpha_1+\alpha_2} \xrightarrow{\cong} O
\end{align*}
of varieties. 
We define for $\mu_1, \mu_2 \in \C$ the local system $\sh{L}_{\mu_1, \mu_2}$
in the category of (left) $\sh{D}$-modules on $O$ by
$\Phi^*(\sh{L}_{\mu_1, \mu_2}) = \sh{O}^{(\mu_1)}_{\Gm} \boxtimes \sh{O}^{(\mu_2)}_{\Gm}
\boxtimes \sh{O}_{\Aff^1}$. Here we use the definition $\sh{O}^{(\mu)}_{\Gm} =
\sh{D}_{\Gm}/\sh{D}_{\Gm}(x\partial_x+\mu)$, where $\sh{D}_{\Gm}$ is the sheaf 
of differential operators on $\Gm$. 
We refer to $\mu_1, \mu_2$ as the two monodromy parameters of $\sh{L}_{\mu_1, \mu_2}$.
By definition $\sh{L}_{\mu_1, \mu_2}$ is isomorphic to $\sh{L}_{\mu_1^{\prime}, \mu_2^{\prime}}$
if and only if $\mu_1-\mu_1^{\prime} \in \Z$ and $\mu_2 -\mu_2^{\prime} \in \Z$. 
The image of $1 \in \sh{D}_{\Gm}$
is denoted by $1_{\mu} \in \sh{O}^{(\mu)}_{\Gm}$. Then $x^k 1_{\mu}$, $k \in \Z$,
is a $\C$-basis of $\Gamma(\Gm, \sh{O}^{(\mu)}_{\Gm})$ and we have
\begin{align}\label{eq:partialxonxk1mu}
\partial_x x^k 1_{\mu}  = (k-\mu) x^{k-1}1_{\mu}\;.
\end{align}
The space of sections of $\sh{L}_{\mu_1, \mu_2}$ will be denoted by $M_{\mu_1, \mu_2}
= \Gamma(O,\sh{L}_{\mu_1, \mu_2})$. $M_{\mu_1, \mu_2}$
has the $\C$-basis 
\begin{align*}
u_{k,l,m} = x_{\alpha_1}^k 1_{\mu_1} \otimes x_{\alpha_2}^l 1_{\mu_2}
\otimes x^m_{\alpha_1+\alpha_2},\; k,l \in \Z,\; m \in \Z_{\geq 0}\;.
\end{align*}
$M_{\mu_1, \mu_2}$ is acted upon by the Lie algebra $\mathfrak{sl}_3$ through infinitesimal left translations
given by the Lie algebra homomorphism $a: \mathfrak{sl}_3 \rightarrow V$,
where $V$ denotes the vector fields on $O$. The $\mathfrak{sl}_3$-module $M_{\mu_1, \mu_2}$ has central character zero. 
By the Beilinson-Bernstein equivalence \cite{BB81} $M_{\mu_1, \mu_2}$
is isomorphic to $M_{\mu_1^{\prime}, \mu_2^{\prime}}$ as $\mathfrak{sl}_3$-module if and only if $\mu_1-\mu_1^{\prime} \in \Z$
and $\mu_2-\mu_2^{\prime} \in \Z$. Thus, the isomorphism class of
the $\mathfrak{sl}_3$-module $M_{\mu_1, \mu_2}$
only depends on $\mu_1+\Z$ and $\mu_2+\Z$.  

\section{$\mathfrak{sl}_3$-action on the basis $u_{k,l,m}$}
We compute that
\begin{align*}
a(e_1) &= -\partial_{x_{\alpha_1}} \\
a(e_2) &= -\partial_{x_{\alpha_2}}+x_{\alpha_1}\partial_{x_{\alpha_1+\alpha_2}} \\
a(h) &= -\alpha_1(h) x_{\alpha_1} \partial_{x_{\alpha_1}}-\alpha_2(h)x_{\alpha_2}\partial_{x_{\alpha_2}}-(\alpha_1+\alpha_2)(h)x_{\alpha_1+\alpha_2}\partial_{x_{\alpha_1+\alpha_2}},\; h \in \mathfrak{t} \\
a(f_1) &= x_{\alpha_1}^2\partial_{x_{\alpha_1}}-(x_{\alpha_1+\alpha_2}+x_{\alpha_1}x_{\alpha_2})\partial_{x_{\alpha_2}}
+x_{\alpha_1}x_{\alpha_1+\alpha_2}\partial_{x_{\alpha_1+\alpha_2}} \\
a(f_2) &= x_{\alpha_2}^2\partial_{x_{\alpha_2}}+x_{\alpha_1+\alpha_2}\partial_{x_{\alpha_1}}\\
a(e_{12}) &= -\partial_{x_{\alpha_1+\alpha_2}} \\
a(f_{12}) &= x_{\alpha_1}x_{\alpha_1+\alpha_2}\partial_{x_{\alpha_1}}+x_{\alpha_2}(x_{\alpha_1+\alpha_2}+x_{\alpha_1}x_{\alpha_2})\partial_{x_{\alpha_2}}+x_{\alpha_1+\alpha_2}^2\partial_{x_{\alpha_1+\alpha_2}}\;.
\end{align*}
From this and \eqref{eq:partialxonxk1mu} we compute

\begin{align}\label{eq:actionofgeneratorsonbasis}
\begin{split}
e_1 u_{k,l,m} &= -\overline{k} u_{k-1,l,m} \\
e_2 u_{k,l,m} &= -\overline{l}u_{k,l-1,m} +m u_{k+1,l,m-1} \\
h u_{k,l,m} &= -((\overline{k}+m)\alpha_1(h)+(\overline{l}+m)\alpha_2(h))u_{k,l,m},\; h \in \mathfrak{t} \\
f_1 u_{k,l,m} &= (\overline{k}-\overline{l}+m) u_{k+1,l,m} -\overline{l}u_{k,l-1,m+1} \\
f_2 u_{k,l,m} &= \overline{l}u_{k,l+1,m}+\overline{k}u_{k-1,l,m+1} \\
e_{12} u_{k,l,m} &= -m u_{k,l,m-1} \\
f_{12} u_{k,l,m} &= (\overline{k}+\overline{l}+m) u_{k,l,m+1}
+\overline{l}u_{k+1, l+1, m}\;.
\end{split}
\end{align}
\emph{Here and below we abbreviate $\overline{k} = k-\mu_1$ and $\overline{l} = l-\mu_2$.}
In particular we see that the subspace of $M_{\mu_1, \mu_2}$ annihilated by $e_{12}$ has the basis $u_{k,l,0}, k, l \in \Z$, and that the $\mathfrak{t}$-weight of $u_{k,l,m}$ is 
\begin{align} \label{eq:weightofuklm}
-(\overline{k}+m)\alpha_1-(\overline{l}+m)\alpha_2\;.
\end{align}
About the structure of the linear combinations in \eqref{eq:actionofgeneratorsonbasis}  that involve two
terms we observe the following: The shift of the indices $k,l,m$ in the two terms is always in complementary components. 

\section{Basis $w_{k, l, m}$}

\emph{For the rest of the article we assume $\mu_1+\mu_2 \notin \Z$ as most of the results require
this condition.} We introduce for $k,l \in \Z, m \in \Z_{\geq 0}$ the vectors in $M_{\mu_1, \mu_2}$
\begin{align} \label{eq:wklm}
w_{k, l, m} = \sum_{n = 0}^m \binom{m}{n}\frac{\overline{l}^{(n)}}{(\overline{k}+\overline{l})^{(n)}} u_{k+n, l+n, m-n} = u_{k,l, m}+\dots\;.
\end{align}
Here $x^{(n)} = x(x+1)\cdots (x+n-1)$ denotes a raising factorial. We have
$w_{k,l,0} = u_{k,l,0}$. We have $f_{12}e_{12} w_{k,l, m} = -m(\overline{k}+\overline{l}+m-1) w_{k,l,m}$
and this eigenvalue condition suffices to determine the coefficients in \eqref{eq:wklm} 
using \eqref{eq:actionofgeneratorsonbasis}. We also have
$h_1 w_{k,l,m} = (-2\overline{k}+\overline{l}-m)w_{k,l,m}$
and $h_2 w_{k,l,m}=(\overline{k}-2\overline{l}-m)w_{k,l,m}$ and the $\mathfrak{t}$-weight
of $w_{k,l,m}$ is \eqref{eq:weightofuklm}. 
We also note that if $-\overline{l} \in \{0, 1, \dots, m\}$ the sum in \eqref{eq:wklm}
truncates to $\sum_{n=0}^{\min\{m,-\overline{l}\}}$. 

\begin{Lemma}
$w_{k, l, m}$, $k, l \in \Z, m \in \Z_{\geq 0}$, form a basis of $M_{\mu_1, \mu_2}$
of simultaneous eigenvectors with respect to the triple of endomorphisms $(h_1, h_2, f_{12}e_{12})$.
The simultaneous eigenvalues determine $(k,l,m)$ uniquely. 
\end{Lemma}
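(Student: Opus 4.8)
The statement breaks into three independent claims: that the $w_{k,l,m}$ are simultaneous eigenvectors, that they form a basis, and that the assignment $(k,l,m) \mapsto (\text{triple of eigenvalues})$ is injective. The first claim is already recorded in the discussion preceding the lemma, where the eigenvalues $-2\overline{k}+\overline{l}-m$ for $h_1$, $\overline{k}-2\overline{l}-m$ for $h_2$, and $-m(\overline{k}+\overline{l}+m-1)$ for $f_{12}e_{12}$ are read off from \eqref{eq:wklm} and \eqref{eq:actionofgeneratorsonbasis}; I would simply invoke these. It then remains to establish the basis property and the injectivity.

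For the basis property, the key observation is that every summand $u_{k+n,l+n,m-n}$ occurring in \eqref{eq:wklm} has the same value of (first index) $+$ (third index) $= k+m$ and of (second index) $+$ (third index) $= l+m$. Fixing $p = k+m$ and $q = l+m$, the vector $w_{k,l,m}$ therefore lies in the block $B_{p,q} = \spanset_{\C}\{u_{p-j,q-j,j} : j \in \Z_{\geq 0}\}$, and these blocks partition the basis $\{u_{k,l,m}\}$ over $(p,q) \in \Z^2$, so the transition $u \mapsto w$ is block diagonal. Inside a fixed block, \eqref{eq:wklm} reads $w_{p-m,q-m,m} = u_{p-m,q-m,m} + (\text{terms of third index} < m)$, so ordering the vectors by the third index makes the transition matrix lower unitriangular; it is thus invertible by back-substitution (starting from $w_{k,l,0} = u_{k,l,0}$), and I would conclude that the $w_{k,l,m}$ form a basis.

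For injectivity, write $a,b,c$ for the three eigenvalues. The combinations $2a+b = -3(\overline{k}+m)$ and $a+2b = -3(\overline{l}+m)$ recover $P := \overline{k}+m$ and $Q := \overline{l}+m$ from $(a,b)$, but they do not separate $m$ from $\overline{k},\overline{l}$. Substituting $\overline{k}+\overline{l}+m = P+Q-m$ into $c = -m(\overline{k}+\overline{l}+m-1)$ turns the $f_{12}e_{12}$-eigenvalue into the quadratic $m^2-(P+Q-1)m-c=0$ in $m$. The main obstacle, and the sole point where the standing hypothesis enters, is excluding a spurious second solution: the two roots of this quadratic sum to $P+Q-1 = (k+l+2m-1)-(\mu_1+\mu_2)$, which is \emph{not} an integer because $\mu_1+\mu_2 \notin \Z$. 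Hence the two roots cannot both be integers, so at most one root is an integer; since $m \in \Z_{\geq 0}$ is a root, it is uniquely determined, and then $\overline{k}=P-m$ and $\overline{l}=Q-m$, hence $k = \overline{k}+\mu_1$ and $l = \overline{l}+\mu_2$, are determined as well. This yields injectivity and completes the proof.
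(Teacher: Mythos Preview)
Your proof is correct. The injectivity argument is essentially the paper's, repackaged via Vieta: the paper solves the same system, finds the spurious second solution explicitly, and observes it forces $\mu_1+\mu_2\in\Z$; you instead note that the two roots of the quadratic in $m$ sum to the non-integer $P+Q-1$, so only one can be integral. These are the same computation in different clothes.

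The organizational difference is in the basis property. The paper first proves injectivity of the eigenvalue map, deduces linear independence of the $w_{k,l,m}$ from distinctness of their joint eigenvalues, and then obtains spanning by an induction on $m$ (inverting \eqref{eq:wklm}). You instead observe directly that the transition $u\mapsto w$ is block-diagonal along the slices $k+m=p$, $l+m=q$, and lower unitriangular within each block, hence invertible by back-substitution; this yields both linear independence and spanning at once, without appealing to the eigenvalue data. Your route cleanly decouples the basis claim from the eigenvalue claim (indeed, it makes transparent that the $w_{k,l,m}$ form a basis whenever the formula \eqref{eq:wklm} is well-defined), whereas the paper's route makes the Gelfand--Tsetlin structure do double duty. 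Both are short; yours is slightly more structural.
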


\begin{proof}
We first show that the simultaneous eigenvalues
$(-2\overline{k}+\overline{l}-m, \overline{k}-2\overline{l}-m,-m(\overline{k}+\overline{l}+m-1))$
of $w_{k,l,m}$ with respect to $(h_1,h_2, f_{12}e_{12})$ determine $(k,l,m)$ uniquely. 
Hence we assume
\begin{align*}
-2\overline{k}+\overline{l}-m &= -2\overline{k^{\prime}}+\overline{l^{\prime}}-m^{\prime} \\
-2\overline{l}+\overline{k}-m &= -2\overline{l^{\prime}}+\overline{k^{\prime}}-m^{\prime} \\
-m(m+\overline{k}+\overline{l}-1) &= -m^{\prime}(m^{\prime}+\overline{k^{\prime}}+\overline{l^{\prime}}-1)\;.
\end{align*}
From the first two equations we get $\overline{k} = \overline{k^{\prime}}+m^{\prime}-m$
and $\overline{l} =\overline{l^{\prime}}+m^{\prime}-m$. If we plug this into the last equation
we obtain the equation
\begin{align*}
0 = (m-(\overline{k^{\prime}}+\overline{l^{\prime}}+m^{\prime}-1))(m-m^{\prime})
\end{align*}
for the remaing variable $m$. We find the solutions
\begin{align*}
\begin{split}
& m = m^{\prime} \quad \overline{k}= \overline{k^{\prime}} \quad \overline{l} =\overline{l^{\prime}}\\
& m = \overline{k^{\prime}}+\overline{l^{\prime}}+m^{\prime}-1\quad \overline{k} = 1-\overline{l^{\prime}} \quad
\overline{l} = 1-\overline{k^{\prime}}\;.
\end{split}
\end{align*}
The second solution, however, requires $\mu_1 +\mu_2 \in \Z$ 
and hence only the first is left.  It follows that the $w_{k,l,m}$ are linearly independent
since they have distinct simultaneous eigenvalues. On the other hand, by induction
on $m \in \Z_{\geq 0}$ it is clear from \eqref{eq:wklm} that $u_{k,l,m}$ is a linear combination of the 
$w_{k+n,l+n,m-n}$ for $n \in \{0, \dots, m\}$. Thus the 
$w_{k,l,m}$ span $M_{\mu_1, \mu_2}$. We have
shown that the $w_{k,l, m}$ form a basis of $M_{\mu_1, \mu_2}$. 
\end{proof}

The lemma implies that $M_{\mu_1, \mu_2}$ is a Gelfand-Tsetlin module
and the $w_{k, l, m}$ a Gelfand-Tsetlin basis. Namely,
it is a direct sum of one dimensional simultaneous eigenspaces $\C w_{k, l, m}$ with respect to $\Gamma$,
where $\Gamma$ is the Gelfand-Tsetlin commutative subalgebra of $\Uea \mathfrak{sl}_3$ determined
by $h_1, h_2$, and the Casimir operator of the $\mathfrak{sl}_2$-subalgebra associated
to the root $\alpha_1+\alpha_2$. Here $\Uea \mathfrak{sl}_3$ is the universal enveloping
algebra of $\mathfrak{sl}_3$.
 Thus $\Gamma$ is a polynomial algebra in five generators
and its action on $M_{\mu_1, \mu_2}$ factors through the polynomial subalgebra whose
generators can be taken to be $h_1, h_2, f_{12}e_{12}$,
the triple we have considered above. We will denote it by $\Gamma_0 = \C[h_1, h_2, f_{12}e_{12}]$.

For $k, l \in \Z, m \in \Z_{\geq 0}$ we can invert \eqref{eq:wklm} to
\begin{align*}
u_{k, l, m} = \sum_{n = 0}^{m}(-1)^n \binom{m}{n} \frac{\overline{l}^{(n)}}{(\overline{k}+\overline{l}+n-1)^{(n)}} w_{k+n, l+n, 
m-n} = w_{k, l, m}+\dots\;.
\end{align*}

\section{$\mathfrak{sl}_3$-action on $w_{k,l,m}$}

We deduce from \eqref{eq:actionofgeneratorsonbasis} that
\begin{align}\label{eq:actionofbasisofsl3onwklm}
\begin{split}
e_1 w_{k,l,m} &= -\overline{k} w_{k-1,l, m}-m\frac{\overline{l}(\overline{l}-1)}{(\overline{k}+\overline{l})
(\overline{k}+\overline{l}-1)}w_{k,l+1,m-1} \\
e_2 w_{k,l,m}&= -\overline{l} w_{k,l-1,m}+m\frac{\overline{k}(\overline{k}-1)}{(\overline{k}+\overline{l})
(\overline{k}+\overline{l}-1)}w_{k+1,l,m-1} \\
f_1 w_{k,l,m}&= \frac{\overline{k}(\overline{k}-1)(\overline{k}+\overline{l}+m)}{(\overline{k}+\overline{l})(\overline{k}+\overline{l}-1)}w_{k+1,l,m}-\overline{l} w_{k,l-1,m+1} \\
f_2 w_{k,l,m} &= \frac{\overline{l}(\overline{l}-1)(\overline{k}+\overline{l}+m)}{(\overline{k}+\overline{l})(\overline{k}+\overline{l}-1)}w_{k,l+1,m} +\overline{k} w_{k-1,l,m+1} \\
e_{12}w_{k,l,m} &= -m w_{k,l,m-1} \\
f_{12} w_{k,l,m}&= (\overline{k}+\overline{l}+m) w_{k,l,m+1}\;.
\end{split}
\end{align}
Here are some basic remarks about the structure of these formulae.
If the right-hand side involves two terms, the shift of $k,l,m$ in these two terms is in 
complementary components. 
The formulae are symmetric under the simultaneous interchange $1 \leftrightarrow 2$ and $k \leftrightarrow l$, if we switch the sign in the term where $m$ is shifted. 
For $m=0$ the action of $e_1$ and $e_2$ involves only one basis element
and one of the coefficients in the expression $f_1 w_{k,l,m}$ and $f_2 w_{k,l,m}$ simplifies. 
 
 \section{Simplicity of $M_{\mu_1, \mu_2}$}

We recall the well-known and elementary 
\begin{Rem} \label{Rem:simultaneousevecslieinsubmodule}
Let $M$ be an $\mathfrak{sl}_3$-module. We consider $v = \sum_{j=1}^n v_j$, where $v_j \in M$ are simultaneous $(h_1,h_2, f_{12}e_{12})$-eigenvectors,
in particular nonzero, with distinct simultaneous eigenvalues. 
Let $\chi_j: \Gamma_0 \rightarrow \C$ be the character defined by the simultaneous eigenvalue of
$v_j$. Let $S$ be the $\Gamma_0$-submodule of $M$ generated by $v$. 
Then each $v_j \in S$. In fact, $S$ equals the $\Gamma_0$-submodule generated by $v_1, \dots, v_n$, in particular it is generated by simultaneous eigenvectors. If $S$ is instead the $\mathfrak{sl}_3$-submodule of $M$ generated by $v$, then $S$ equals the $\mathfrak{sl}_3$-submodule generated by
$v_1, \dots, v_n$. 
Indeed, we have for $g \in \Gamma_0$
\begin{align*}
S \ni v &= \sum_j v_j\\
S \ni g v &= \sum_j \chi_j(g)v_j \\
\cdots \\
S \ni g^n v &= \sum_j \chi_j(g)^n v_j\;.
\end{align*}
If the Vandermonde determinant 
\begin{align}\label{eq:VanderMonde}
\prod_{i < j} (\chi_j(g)-\chi_i(g)) \neq 0
\end{align}
it follows that each $v_j \in S$. 
Now for $i \neq j$ distinctness of the simultaneous eigenvalues of $v_i$ and $v_j$ is equivalent to $\chi_j \neq \chi_i$
and to the linear functional 
\begin{align*}
\chi_{ij} =(\chi_j-\chi_i)\vert \C h_1 \oplus \C h_2 \oplus \C f_{12}e_{12}: \C h_1 \oplus \C h_2 \oplus \C f_{12}e_{12}  \rightarrow \C
\end{align*}
being nonzero. Hence $\codim \ker \chi_{ij} \geq 1$ and consequently $\C h_1 \oplus \C h_2 \oplus \C f_{12}e_{12} \setminus \bigcup_{i < j} \ker \chi_{ij}$
is open in $\C h_1 \oplus \C h_2 \oplus \C f_{12}e_{12}$. Thus, there is a $g \in \C h_1 \oplus \C h_2 \oplus \C f_{12}e_{12}$ such that
\eqref{eq:VanderMonde} holds. 
\end{Rem}

\begin{Thm}\label{Thm:Mmu1mu2simplicitycriterion}
$M_{\mu_1, \mu_2}$ is simple if and only if $\mu_1 \notin \Z$ and $\mu_2 \notin \Z$. 
\end{Thm}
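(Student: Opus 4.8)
The plan is to prove both implications by working in the eigenbasis $w_{k,l,m}$ and reading the coefficients off \eqref{eq:actionofbasisofsl3onwklm}, using the standing assumption $\mu_1+\mu_2\notin\Z$ throughout to guarantee that the denominators $(\overline{k}+\overline{l})(\overline{k}+\overline{l}-1)$ never vanish, so that every coefficient in \eqref{eq:actionofbasisofsl3onwklm} is well defined. The two directions are then of opposite flavour: non-simplicity is witnessed by an explicit invariant subset of the basis, while simplicity follows from a connectivity (irreducible graph) argument fed by Remark \ref{Rem:simultaneousevecslieinsubmodule}.

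For the direction giving non-simplicity when $\mu_1\in\Z$ or $\mu_2\in\Z$, I would exhibit a proper nonzero submodule. Suppose $\mu_1\in\Z$; then $\overline{k}=0$ is attained, namely at $k=\mu_1$, and I claim that $N_1=\spanset\{w_{\mu_1,l,m}:l\in\Z,\,m\in\Z_{\geq 0}\}$ is an $\mathfrak{sl}_3$-submodule. The key observation is that in \eqref{eq:actionofbasisofsl3onwklm} every summand that changes the first index $k$ carries a prefactor $\overline{k}$ (in $e_1$ and $f_2$) or $\overline{k}(\overline{k}-1)$ (in $e_2$ and $f_1$), which vanishes at $\overline{k}=0$, whereas $e_{12},f_{12}$ never change $k$; here $\mu_2\notin\Z$ (forced by $\mu_1+\mu_2\notin\Z$) keeps the surviving coefficients finite. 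Thus $N_1$ is invariant under $e_1,e_2,f_1,f_2$, which generate $\mathfrak{sl}_3$, and it is nonzero and proper (it omits $w_{\mu_1+1,0,0}$). The case $\mu_2\in\Z$ is handled symmetrically via the slice $\overline{l}=0$, using the $1\leftrightarrow 2$, $k\leftrightarrow l$ symmetry of \eqref{eq:actionofbasisofsl3onwklm}.

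For the converse, assume $\mu_1,\mu_2\notin\Z$ together with $\mu_1+\mu_2\notin\Z$. Then $\overline{k},\overline{l},\overline{k}+\overline{l}\notin\Z$, so none of $\overline{k},\,\overline{k}-1,\,\overline{l},\,\overline{l}-1,\,\overline{k}+\overline{l}+m$ vanishes, and consequently each coefficient in \eqref{eq:actionofbasisofsl3onwklm} is nonzero whenever its combinatorial prefactor ($1$ or $m$) is. Let $N\neq 0$ be a submodule and pick $0\neq v\in N$. Expanding $v$ in the basis $w_{k,l,m}$ and applying Remark \ref{Rem:simultaneousevecslieinsubmodule} — the $w_{k,l,m}$ are $\Gamma_0$-eigenvectors with pairwise distinct eigenvalues by the Lemma — I conclude that $N$ contains each basis vector occurring in $v$, in particular some $w_{k_0,l_0,m_0}$. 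It then suffices to show that the $\mathfrak{sl}_3$-submodule generated by a single $w_{k_0,l_0,m_0}$ is all of $M_{\mu_1,\mu_2}$. This is a connectivity statement: $f_1,e_1$ realise $(k,l,m)\mapsto(k\pm 1,l,m)$, $f_2,e_2$ realise $(k,l,m)\mapsto(k,l\pm1,m)$, and $f_{12},e_{12}$ realise $(k,l,m)\mapsto(k,l,m\pm1)$, each with nonzero coefficient (the move $m\mapsto m-1$ requiring $m\geq 1$). Since each generator sends a basis vector to a combination of at most two basis vectors with distinct eigenvalues, Remark \ref{Rem:simultaneousevecslieinsubmodule} again lets me extract the single target; adjusting $m$ first (by $f_{12}$ or $e_{12}$), then $k$, then $l$, one reaches every $w_{k,l,m}$ from $w_{k_0,l_0,m_0}$, whence $N=M_{\mu_1,\mu_2}$.

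The point requiring the most care is the bookkeeping that upgrades "apply a generator" to "the target basis vector lies in $N$": each application of a generator yields a two-term output, and it is precisely the distinctness of the $\Gamma_0$-eigenvalues (the Lemma), feeding Remark \ref{Rem:simultaneousevecslieinsubmodule}, that allows one to split off the single desired basis vector. One must also verify that all moves actually used have genuinely nonzero coefficients, which is exactly where $\mu_1,\mu_2\notin\Z$ (nonvanishing of $\overline{k},\overline{k}-1,\overline{l},\overline{l}-1$) and $\mu_1+\mu_2\notin\Z$ (the denominators and $\overline{k}+\overline{l}+m\neq0$) enter. The boundary behaviour at $m=0$ is harmless, since the shifts $k\mapsto k\pm1$, $l\mapsto l\pm1$, $m\mapsto m+1$, together with $m\mapsto m-1$ for $m\geq1$, already connect the whole index set.
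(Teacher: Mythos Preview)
Your proof is correct and follows the same overall strategy as the paper: work in the $w_{k,l,m}$ basis, use Remark \ref{Rem:simultaneousevecslieinsubmodule} to pass from an arbitrary nonzero vector to a basis vector, and then argue connectivity via \eqref{eq:actionofbasisofsl3onwklm}. The two executions differ in detail, though.

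For non-simplicity, the paper simply points forward to \autoref{sec:subquotients}, whereas you exhibit the slice $\overline{k}=0$ (resp.\ $\overline{l}=0$) explicitly; this is the same submodule as the paper's $M_{\overline{l}=0}$ up to the $1\leftrightarrow 2$ symmetry, and your argument is self-contained. For simplicity, the paper arranges the steps so that Remark \ref{Rem:simultaneousevecslieinsubmodule} is invoked only once: it first drops to $m=0$ via $e_{12}$, then uses that $e_1,e_2$ act by single terms at $m=0$, then applies $f_{12}$, and finally observes that in $f_1 w_{k_0,l,m}=C\,w_{k_0+1,l,m}-\overline{l}\,w_{k_0,l-1,m+1}$ the second summand is already known to lie in $S$, so no eigenvalue-splitting is needed to isolate $w_{k_0+1,l,m}$. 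Your version instead invokes Remark \ref{Rem:simultaneousevecslieinsubmodule} at every move to peel off the desired summand from a two-term output; this is equally valid (the Lemma guarantees distinct $\Gamma_0$-eigenvalues) and arguably cleaner conceptually, at the cost of repeated appeals to the splitting lemma. Either way the hypotheses $\mu_1,\mu_2\notin\Z$ and $\mu_1+\mu_2\notin\Z$ are used exactly as you describe.
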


\begin{proof}
If $\mu_1 \in \Z$ or $\mu_2 \in \Z$ it is easy to exhibit proper
submodules of $M_{\mu_1, \mu_2}$, see \autoref{sec:subquotients} below. Now let $\mu_1 \notin \Z$ and $\mu_2 \notin \Z$. Let $v \neq 0 \in M_{\mu_1, \mu_2}$ and $S$ be the $\mathfrak{sl}_3$-submodule generated by $v$. 
By Remark \autoref{Rem:simultaneousevecslieinsubmodule} $w_{k_0, l_0, m_0} \in S$
for some $k_0, l_0, m_0$. 
Hence it suffices to show that $w_{k_0, l_0, m_0}$ generates all of $M_{\mu_1, \mu_2}$
when acting with $\mathfrak{sl}_3$. 
We now look at \eqref{eq:actionofbasisofsl3onwklm}. Acting with $e_{12}$ we find $w_{k_0, l_0, 0} \in S$. 
Acting then with $e_1$ we find $w_{k, l_0, 0} \in S$ for $k \leq k_0$. Acting with $e_2$ we find 
$w_{k, l, 0} \in S$ for $k \leq k_0, l \leq l_0$. Acting with $f_{12}$ we find $w_{k,l,m} \in S$, $k \leq k_0, l \leq l_0, m \geq 0$. 
Now $f_1 w_{k_0, l, m} = C w_{k_0+1, l, m} - \overline{l}w_{k_0, l-1, m+1}$ 
with $C \neq 0$ and this implies $w_{k_0+1, l, m} \in S$ for $l \leq l_0, m \geq 0$. 
Thus we find $w_{k,l,m} \in S$ for $k \in \Z, l \leq l_0, m \geq 0$. 
Using $f_2$ we find in the same way $w_{k,l,m}\in S$ for $k, l \in \Z, m \geq 0$. 
\end{proof}

Let $j: O \hookrightarrow \SL_3/B$ be the inclusion. As a direct consequence
of the Beilinson-Bernstein equivalence \cite{BB81} we find
\begin{Cor}
$j_{\cdot} \sh{L}_{\mu_1, \mu_2}$ is a simple $\sh{D}_{\SL_3/B}$-module
if and only if $\mu_1 \notin \Z$ and $\mu_2 \notin \Z$. Thus, the extension of $\sh{L}_{\mu_1, \mu_2}$ with respect to $j$ is clean \cite{Ber}
if and only if $\mu_1 \notin \Z$ and $\mu_2 \notin \Z$. 
\end{Cor}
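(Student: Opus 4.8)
The plan is to carry the simplicity criterion of \autoref{Thm:Mmu1mu2simplicitycriterion} from the $\mathfrak{sl}_3$-module $M_{\mu_1, \mu_2}$ over to the $\sh{D}_{\SL_3/B}$-module $j_\cdot \sh{L}_{\mu_1, \mu_2}$ by means of the global sections functor. First I would observe that $O$ is affine: by \eqref{eq:Phi} it is isomorphic to $\Gm^2 \times \Aff^1$, so $j$ is an affine open embedding. Consequently the direct image $j_\cdot$ is exact, lands in a single cohomological degree, and satisfies $\Gamma(\SL_3/B, j_\cdot \sh{L}_{\mu_1, \mu_2}) = \Gamma(O, \sh{L}_{\mu_1, \mu_2}) = M_{\mu_1, \mu_2}$ as $\mathfrak{sl}_3$-modules. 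The central character of $M_{\mu_1, \mu_2}$ is the regular one attached to the trivial representation, i.e. to the regular dominant weight $\rho = \alpha_1 + \alpha_2$, so the Beilinson-Bernstein theorem \cite{BB81} guarantees that $\Gamma(\SL_3/B, -)$ is an exact equivalence from $\sh{D}_{\SL_3/B}$-modules onto the $\mathfrak{sl}_3$-modules with this central character.

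An equivalence of categories preserves and reflects simple objects, so $j_\cdot \sh{L}_{\mu_1, \mu_2}$ is simple if and only if $M_{\mu_1, \mu_2}$ is. By \autoref{Thm:Mmu1mu2simplicitycriterion} the latter happens exactly when $\mu_1 \notin \Z$ and $\mu_2 \notin \Z$, which is the first assertion. Note that this conclusion holds for every choice of monodromy parameters, a fact I shall reuse below.

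For the cleanness statement I would pass through the intermediate extension. Since $\sh{L}_{\mu_1, \mu_2}$ is a rank-one local system it is simple on $O$, hence $j_{!*}\sh{L}_{\mu_1, \mu_2}$ is a simple $\sh{D}_{\SL_3/B}$-module fitting into the canonical factorization $j_! \sh{L}_{\mu_1, \mu_2} \twoheadrightarrow j_{!*}\sh{L}_{\mu_1, \mu_2} \hookrightarrow j_\cdot \sh{L}_{\mu_1, \mu_2}$ of the map $j_! \sh{L}_{\mu_1, \mu_2} \to j_\cdot \sh{L}_{\mu_1, \mu_2}$. Thus $j_\cdot \sh{L}_{\mu_1, \mu_2}$ is simple if and only if it coincides with $j_{!*}\sh{L}_{\mu_1, \mu_2}$. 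Applying Verdier duality $\Dual$, which exchanges $j_!$ and $j_\cdot$ and sends $\sh{L}_{\mu_1, \mu_2}$ to $\sh{L}_{-\mu_1, -\mu_2}$, I get that $j_! \sh{L}_{\mu_1, \mu_2}$ is simple if and only if $j_\cdot \sh{L}_{-\mu_1, -\mu_2}$ is; by the previous paragraph this is governed by $-\mu_i \notin \Z$, i.e. by the same condition $\mu_i \notin \Z$. Hence $j_! \sh{L}_{\mu_1, \mu_2}$ and $j_\cdot \sh{L}_{\mu_1, \mu_2}$ are simultaneously simple, precisely when $\mu_1 \notin \Z$ and $\mu_2 \notin \Z$; in that case both equal $j_{!*}\sh{L}_{\mu_1, \mu_2}$, so the canonical map $j_! \sh{L}_{\mu_1, \mu_2} \to j_\cdot \sh{L}_{\mu_1, \mu_2}$ is an isomorphism and the extension is clean in the sense of \cite{Ber}. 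Conversely, if some $\mu_i \in \Z$ then $j_\cdot \sh{L}_{\mu_1, \mu_2} \neq j_{!*}\sh{L}_{\mu_1, \mu_2}$, so the factorization above cannot collapse to an isomorphism and the extension is not clean.

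The routine inputs here are the affineness of $O$ and the identification of global sections; the genuinely structural step is the Beilinson-Bernstein equivalence at the regular central character, which is what turns the module-theoretic \autoref{Thm:Mmu1mu2simplicitycriterion} into a statement about $\sh{D}_{\SL_3/B}$-modules. I expect the main care to be required in the cleanness part: matching the definition of \cite{Ber} with the equality $j_! \sh{L}_{\mu_1, \mu_2} = j_\cdot \sh{L}_{\mu_1, \mu_2}$ and correctly tracking how Verdier duality acts on the two monodromy parameters.
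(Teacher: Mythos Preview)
Your proposal is correct and follows the paper's intended approach: the paper simply states the corollary ``as a direct consequence of the Beilinson--Bernstein equivalence'' together with \autoref{Thm:Mmu1mu2simplicitycriterion}, and your first paragraph is precisely that deduction spelled out. Your treatment of the cleanness assertion via the factorization $j_!\twoheadrightarrow j_{!*}\hookrightarrow j_\cdot$ and Verdier duality (noting that $(-\mu_1,-\mu_2)$ still satisfies the standing hypothesis $\mu_1+\mu_2\notin\Z$) is a standard and correct elaboration of what the paper leaves implicit in the word ``Thus''.
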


\section{Dual $M_{\mu_1, \mu_2}^{\vee}$}

\subsection{Definition of the dual}
Let $M$ be an $\mathfrak{sl}_3$-module with the property that it is the direct sum
of its finite dimensional simultaneous $(h_1, h_2, f_{12}e_{12})$-eigenspaces  $M_{\chi}$.
We define on $M^{\vee} = \bigoplus_{\chi} \Hom_{\C}(M_{\chi},\C)$, where $\chi$
runs through the simultaneous eigenvalues of $M$, an $\mathfrak{sl}_3$-action by
$(X \eta)(v) = -\eta(\tau(X)v)$ for $X \in \mathfrak{sl}_3$, $v \in M_{\chi}$, $\eta
\in \Hom_{\C}(M_{\chi}, \C)$. Here $\tau$ is the standard involution of $\mathfrak{sl}_3$
given by 
\begin{align*}
\tau: h_1 \mapsto -h_1,\; h_2 \mapsto -h_2,\; f_1 \leftrightarrow e_1, f_2 \leftrightarrow e_2,\;
-f_{12} \leftrightarrow e_{12}\;.
\end{align*}

The vector subspace $\Hom_{\C}(M_{\chi},\C) \subseteq M^{\vee}$ is then the simultaneous
eigenspace associated to $\chi$, i.e. $\Hom_{\C}(M_{\chi}, \C) = (M^{\vee})_{\chi}$. 
$M^{\vee}$ is called the dual of $M$.

\subsection{$\mathfrak{sl}_3$-action on the dual basis $\eta_{k,l,m}$}

This construction of the dual applies to $M_{\mu_1, \mu_2}$. 
We consider the basis $\eta_{k,l,m}$ of $M_{\mu_1,\mu_2}^{\vee}$ dual to the basis $w_{k,l,m}$ of $M_{\mu_1,\mu_2}$. Thus, $\eta_{k,l,m} \in \Hom_{\C}(\C w_{k,l,m},\C)$
is defined by the normalization $\eta_{k,l,m}(w_{k,l,m}) = 1$. As observed above, $\eta_{k,l,m}$ has the same simultaneous
$(h_1,h_2,f_{12}e_{12})$-eigenvalues as $w_{k,l,m}$. 
The $\mathfrak{sl}_3$-action 
on the $\eta_{k,l,m}$ immediately follows from \eqref{eq:actionofbasisofsl3onwklm}
\begin{align}\label{eq:actiononetaabc}
\begin{split}
e_1 \eta_{k,l,m} &= -\frac{(\overline{k}-1)(\overline{k}-2)(\overline{k}+\overline{l}+m-1)}{(\overline{k}+\overline{l}-1)(\overline{k}+\overline{l}-2)}\eta_{k-1,l,m} +\begin{cases}(\overline{l}+1)\eta_{k,l+1,m-1} & m > 0 \\ 0 & m=0 \end{cases}\\
e_2 \eta_{k,l,m}&= -\frac{(\overline{l}-1)(\overline{l}-2)(\overline{k}+\overline{l}+m-1)}{(\overline{k}+\overline{l}-1)(\overline{k}+\overline{l}-2)}\eta_{k,l-1,m}
-\begin{cases}(\overline{k}+1)\eta_{k+1,l,m-1} & m > 0 \\ 0& m=0\end{cases} \\
f_1 \eta_{k,l,m} &= (\overline{k}+1)\eta_{k+1,l,m}+(m+1)\frac{(\overline{l}-1)(\overline{l}-2)}{(\overline{k}+\overline{l}-1)(\overline{k}+\overline{l}-2)}\eta_{k,l-1,m+1}\\
f_2 \eta_{k,l,m} &= (\overline{l}+1)\eta_{k,l+1,m}-(m+1)\frac{(\overline{k}-1)(\overline{k}-2)}{(\overline{k}+\overline{l}-1)(\overline{k}+\overline{l}-2)}\eta_{k-1,l,m+1} \\
e_{12} \eta_{k,l,m} &= \begin{cases}(\overline{k}+\overline{l}+m-1)\eta_{k,l,m-1} & m > 0 \\0 & m = 0 \end{cases}\\
f_{12}\eta_{k,l,m}&= -(m+1)\eta_{k,l,m+1}\;.
\end{split}
\end{align}

\subsection{Structure of $M^{\vee}_{\mu_1, \mu_2}$}

It follows from Theorem \autoref{Thm:Mmu1mu2simplicitycriterion} that $M_{\mu_1, \mu_2}^{\vee}$ is simple if and only if
$\mu_1 \notin \Z$ and $\mu_2 \notin \Z$. 

\begin{Thm}
If $\mu_2 \in \Z$ and $k_0 \in \Z$ $\eta_{k_0, \mu_2, 0}$ generates $M^{\vee}_{\mu_1, \mu_2}$
as $\mathfrak{sl}_3$-module. The analogous statement holds for $\mu_1 \in \Z$. In particular, $M^{\vee}_{\mu_1, \mu_2}$
is a cyclic $\mathfrak{sl}_3$-module for any $\mu_1, \mu_2$. 
\end{Thm}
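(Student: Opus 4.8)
The plan is to propagate the single vector $\eta_{k_0,\mu_2,0}$ across the whole basis by repeatedly applying the raising and lowering operators of \eqref{eq:actiononetaabc} and, whenever the output is a sum of two basis vectors, splitting it into its individual eigen-components via \autoref{Rem:simultaneousevecslieinsubmodule}. The decisive preliminary observation is that the standing assumption $\mu_1+\mu_2\notin\Z$ makes the hypothesis $\mu_2\in\Z$ automatically force $\mu_1\notin\Z$, for otherwise $\mu_1+\mu_2$ would be an integer. Hence $\overline{k}=k-\mu_1\notin\Z$ for every $k$, while $\overline{l}=l-\mu_2\in\Z$, and moreover $\overline{k}+\overline{l}-1,\overline{k}+\overline{l}-2\notin\Z$ so that no denominator in \eqref{eq:actiononetaabc} ever vanishes. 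In particular any factor of the form $\overline{k}+1$, $\overline{k}-1$ or $\overline{k}-2$ is nonzero, as is each denominator. The generator $\eta_{k_0,\mu_2,0}$ sits at $\overline{l}=0$, $m=0$, which is exactly the locus where the remaining $\overline{l}$-factors are easiest to control.

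First I would move freely in $k$ at the level $l=\mu_2$, $m=0$. Acting with $e_1$ gives the single term $-(\overline{k}-1)\eta_{k-1,\mu_2,0}$ (the $m=0$ branch kills the second summand), with nonzero coefficient, so all $\eta_{k,\mu_2,0}$ with $k\le k_0$ enter the submodule $S$ generated by $\eta_{k_0,\mu_2,0}$. Acting with $f_1$ gives $(\overline{k}+1)\eta_{k+1,\mu_2,0}$ plus a nonzero multiple of $\eta_{k,\mu_2-1,1}$; both coefficients are nonzero and the two summands are distinct $(h_1,h_2,f_{12}e_{12})$-eigenvectors, so \autoref{Rem:simultaneousevecslieinsubmodule} places each of them in $S$, and iterating yields all $\eta_{k,\mu_2,0}$ with $k\ge k_0$. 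Thus $\eta_{k,\mu_2,0}\in S$ for every $k\in\Z$.

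Next I would move in $l$ while keeping $m=0$ and $k$ fixed. Starting from any $\eta_{k,\mu_2,0}$, the operator $e_2$ produces the single term proportional to $(\overline{l}-1)(\overline{l}-2)\,\eta_{k,l-1,0}$; for $\overline{l}\le 0$ this factor is a product of two strictly negative integers, hence nonzero, so descending in $l$ reaches all $\eta_{k,l,0}$ with $l\le\mu_2$. Ascending, $f_2$ produces $(\overline{l}+1)\eta_{k,l+1,0}$ plus a nonzero multiple of $\eta_{k-1,l,1}$; for $\overline{l}\ge 0$ the coefficient $\overline{l}+1\ge 1$ is nonzero, so again by \autoref{Rem:simultaneousevecslieinsubmodule} one climbs to all $\eta_{k,l,0}$ with $l\ge\mu_2$. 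Combining the two directions, $\eta_{k,l,0}\in S$ for all $k,l\in\Z$. Finally $f_{12}\eta_{k,l,m}=-(m+1)\eta_{k,l,m+1}$ is a single term with nonzero coefficient, so raising $m$ from each $\eta_{k,l,0}$ fills in all $m\ge 0$; hence $S=M^{\vee}_{\mu_1,\mu_2}$. The statement for $\mu_1\in\Z$ is obtained by the same argument with the roles of the two simple roots (equivalently, of $k$ and $l$) interchanged, starting from $\eta_{\mu_1,l_0,0}$, i.e.\ the locus $\overline{k}=0$. For the final assertion one only has to note that the cases are exhaustive: if $\mu_1,\mu_2\notin\Z$ then $M^{\vee}_{\mu_1,\mu_2}$ is simple and any nonzero vector generates it, while if $\mu_1\in\Z$ or $\mu_2\in\Z$ one of the two cyclicity statements just proved applies; since $\mu_1+\mu_2\notin\Z$ precludes $\mu_1,\mu_2$ from both being integral, no case is missing.

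I expect the only genuine care to be bookkeeping: at each propagation step one must check that the coefficient multiplying the basis vector to be captured is nonzero and that the accompanying summand, when present, is a distinct eigenvector so that \autoref{Rem:simultaneousevecslieinsubmodule} applies. The reduction $\mu_2\in\Z\Rightarrow\mu_1\notin\Z$ is precisely what removes every potential zero coming from the $\overline{k}$-factors and the denominators, so once that is in hand the argument becomes a routine sweep; the surviving $\overline{l}$-factors are controlled simply by the signs of $\overline{l}\pm(\text{small integers})$ on the two half-lines $\overline{l}\le 0$ and $\overline{l}\ge 0$ emanating from the starting locus $\overline{l}=0$.
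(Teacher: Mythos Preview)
Your proof is correct. The route differs from the paper's in one respect: whenever an operator produces a two-term combination you invoke Remark~\ref{Rem:simultaneousevecslieinsubmodule} to split it into its eigen-components, whereas the paper avoids this device entirely. The paper first sweeps out the ``corner'' $k\le k_0$, $\overline{l}\le 0$, $m\ge 0$ using only operators that act by a \emph{single} term at $m=0$ (namely $e_2$, then $e_1$, then $f_{12}$), and only afterwards applies $f_1$ and $f_2$; at that stage the second summand (e.g.\ $\eta_{k_0,l-1,m+1}$ in $f_1\eta_{k_0,l,m}$) already lies in the corner, so it can simply be subtracted, yielding the new basis vector without any eigenvalue argument. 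Your approach trades this careful ordering for repeated use of the splitting remark, which makes the propagation more direct (you move along one coordinate at a time) but requires checking at each step that the two summands have distinct $\Gamma_0$-eigenvalues---which, as you note, is guaranteed by Lemma~4.1. Either method works; the paper's is slightly more elementary, yours slightly more streamlined.
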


\begin{proof}
This follows by inspecting \eqref{eq:actiononetaabc}. Let $S$ be the $\mathfrak{sl}_3$-submodule
generated by $\eta_{k_0, \mu_2, 0}$. Acting with $e_2$ we get $\eta_{k_0, l, 0} \in S$, $\overline{l} \leq 0$. Acting then with $e_1$ we get $\eta_{k,l,0} \in S$, $k \leq k_0, \overline{l} \leq 0$. 
Acting then with $f_{12}$ we get $\eta_{k,l,m} \in S$, $k \leq k_0, \overline{l} \leq 0, m \geq 0$. 
From $f_1 \eta_{k_0, l, m} = (\overline{k_0}+1)\eta_{k_0+1, l, m}+ C \eta_{k_0, l-1,m+1}$
and $f_1 \eta_{k_0, l, m}, \eta_{k_0,l-1,m+1} \in S$ we conclude $\eta_{k_0+1,l,m} \in S$
for $\overline{l} \leq 0, m \geq 0$. Thus $\eta_{k,l,m}\in S$, $k \in \Z, \overline{l} \leq 0, m \geq 0$. 
Similarly, $f_2 \eta_{k,l,m} = (\overline{l}+1)\eta_{k,l+1,m}+D \eta_{k-1,l,m+1}$ shows,
as $\overline{l}+1 > 0$ for $\overline{l} \geq 0$, that $\eta_{k,l,m} \in S$,
$k,l \in \Z, m \geq 0$. \par
In the case $\mu_1 \in \Z$ we argue as in the above argument, applying
$e_1, e_2, f_{12}, f_2, f_1$, in this order, in accord with the symmetry of the formulae \eqref{eq:actiononetaabc}. 
\end{proof}

 \begin{Thm} \label{Thm:M-mu1mu2congMveemu1mu2}
$M_{\mu_1,\mu_2} \cong M_{\mu_1,\mu_2}^{\vee}$ as $\mathfrak{sl}_3$-module if and only if $\mu_1 \notin \Z$ and $\mu_2 \notin \Z$.
 \end{Thm}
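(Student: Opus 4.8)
The plan is to treat the two implications separately, exploiting throughout that both $M_{\mu_1,\mu_2}$ and $M^{\vee}_{\mu_1,\mu_2}$ decompose into one-dimensional simultaneous $(h_1,h_2,f_{12}e_{12})$-eigenspaces indexed by the same triples $(k,l,m)$, the eigenvector being $w_{k,l,m}$ on one side and $\eta_{k,l,m}$ on the other, with identical eigenvalues. Consequently any $\mathfrak{sl}_3$-module homomorphism $\phi\colon M_{\mu_1,\mu_2}\to M^{\vee}_{\mu_1,\mu_2}$ commutes with $\Gamma_0$ and is therefore forced to be diagonal, $\phi(w_{k,l,m})=c_{k,l,m}\,\eta_{k,l,m}$ for scalars $c_{k,l,m}\in\C$. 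This reduces the existence of an isomorphism to a scalar recursion problem and, dually, drives the non-existence argument through $\mathfrak{t}$-weights.

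For the implication $\Leftarrow$ I assume $\mu_1\notin\Z$ and $\mu_2\notin\Z$ and impose $\phi(Xw_{k,l,m})=X\phi(w_{k,l,m})$ for the generators, reading off the coefficients from \eqref{eq:actionofbasisofsl3onwklm} and \eqref{eq:actiononetaabc}. Matching $\eta_{k,l,m+1}$ in the $f_{12}$-relation yields $(\overline{k}+\overline{l}+m)\,c_{k,l,m+1}=-(m+1)\,c_{k,l,m}$, while the leading terms of the $f_1$- and $f_2$-relations express $c_{k+1,l,m}$ and $c_{k,l+1,m}$ as explicit nonzero rational multiples of $c_{k,l,m}$. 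These recursions determine every $c_{k,l,m}$ from $c_{0,0,0}=1$ in closed form, and crucially each factor occurring in numerators and denominators (such as $\overline{k},\ \overline{k}-1,\ \overline{l},\ \overline{l}-1,\ \overline{k}+\overline{l},\ \overline{k}+\overline{l}-1,\ \overline{k}+\overline{l}+m$) is nonzero precisely because $\mu_1,\mu_2\notin\Z$ together with the standing hypothesis $\mu_1+\mu_2\notin\Z$. Hence all $c_{k,l,m}$ are nonzero, so once $\phi$ is known to be a homomorphism it is automatically bijective; alternatively, since both modules are simple by \autoref{Thm:Mmu1mu2simplicitycriterion} and by the simplicity of $M^{\vee}_{\mu_1,\mu_2}$ recorded above, any nonzero such $\phi$ is an isomorphism by Schur's lemma.

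The main obstacle in this direction is that the system is heavily over-determined: beyond the three relations above, the generators $e_{12},e_1,e_2$ and the index-shifting second terms in $e_1,e_2,f_1,f_2$ each impose a further identity on the $c_{k,l,m}$. I would verify that all of these are consistent with the closed form by direct substitution; the structural reason they must be is that the relations descend from genuine $\mathfrak{sl}_3$-module structures on both sides, reflecting $[f_1,f_{12}]=[f_2,f_{12}]=0$ and $[f_1,f_2]=-f_{12}$. This verification is routine but lengthy, and it is the computational heart of the argument.

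For the implication $\Rightarrow$ I argue by contraposition and, using the symmetry $1\leftrightarrow 2,\ k\leftrightarrow l$ of the formulae, treat the case $\mu_2\in\Z$ (the case $\mu_1\in\Z$ being identical with $e_2$ replaced by $e_1$). Here I compare the $\mathfrak{t}$-stable subspace $\ker e_2\cap\ker e_{12}$ in the two modules. From \eqref{eq:actionofbasisofsl3onwklm} one reads off $\ker e_{12}=\spanset\{w_{k,l,0}\}$ and then $\ker e_2\cap\ker e_{12}=\spanset\{w_{k,\mu_2,0}:k\in\Z\}$, whose $\mathfrak{t}$-weights $-\overline{k}\alpha_1$ by \eqref{eq:weightofuklm} all have vanishing $\alpha_2$-coefficient. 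On the dual side \eqref{eq:actiononetaabc} gives $\ker e_{12}=\spanset\{\eta_{k,l,0}\}$ and $\ker e_2\cap\ker e_{12}=\spanset\{\eta_{k,\mu_2+1,0},\eta_{k,\mu_2+2,0}:k\in\Z\}$, whose weights $-\overline{k}\alpha_1-\alpha_2$ and $-\overline{k}\alpha_1-2\alpha_2$ all have $\alpha_2$-coefficient $-1$ or $-2$. Since an isomorphism commutes with $e_2,e_{12}$ and with $\mathfrak{t}$, it would restrict to a weight-preserving isomorphism between these two subspaces, which is impossible as their weight sets are disjoint. Hence $M_{\mu_1,\mu_2}\not\cong M^{\vee}_{\mu_1,\mu_2}$ whenever $\mu_2\in\Z$, and symmetrically whenever $\mu_1\in\Z$, which together with the $\Leftarrow$ direction completes the proof.
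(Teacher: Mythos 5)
Your ``only if'' direction is correct and takes a genuinely different route from the paper. The paper first reduces any isomorphism to diagonal form $\phi(w_{k,l,m})=x_{k,l,m}\eta_{k,l,m}$ (possible because the $\Gamma_0$-eigenvalues are simple and determine $(k,l,m)$) and then, for $\mu_1\in\Z$, observes that at $\overline{k}=0$ the coefficient of $w_{k-1,l,m}$ in $e_1w_{k,l,m}$ vanishes while the coefficient of $\eta_{k-1,l,m}$ in $e_1\eta_{k,l,m}$ does not, so no diagonal intertwiner exists. You instead compare the invariant subspaces $\ker e_2\cap\ker e_{12}$: by \eqref{eq:actionofbasisofsl3onwklm} this is $\spanset\{w_{k,\mu_2,0}\}$ with weights $-\overline{k}\alpha_1$, while by \eqref{eq:actiononetaabc} (the coefficient of $e_2$ on $\eta_{k,l,0}$ being $-(\overline{l}-1)(\overline{l}-2)/(\overline{k}+\overline{l}-2)$, with zeros exactly at $\overline{l}\in\{1,2\}$) it is $\spanset\{\eta_{k,\mu_2+1,0},\eta_{k,\mu_2+2,0}\}$ with weights $-\overline{k}\alpha_1-\alpha_2$ and $-\overline{k}\alpha_1-2\alpha_2$; the weight sets are disjoint, so no isomorphism exists. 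Your computation checks out, and this argument has the merit of not needing the diagonal-form reduction at all: it exhibits a structural invariant distinguishing the two modules.

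The ``if'' direction, however, has a genuine gap, located exactly where you place the ``computational heart.'' Your setup is the paper's (the paper runs the recursion off $e_1,e_2,e_{12}$ rather than $f_1,f_2,f_{12}$, which is immaterial), and the nonvanishing of all recursion factors under $\mu_1,\mu_2\notin\Z$, $\mu_1+\mu_2\notin\Z$ is right. But the consistency of the remaining, over-determined relations is the entire content of this implication, and the structural reason you offer --- that the relations ``descend from genuine $\mathfrak{sl}_3$-module structures on both sides'' --- is not a valid argument. Genuineness of both module structures together with matching $\Gamma_0$-spectra does \emph{not} force the over-determined system to admit an all-nonzero solution: the paper's own Proposition \autoref{Prop:structureofMbgeqmu2etc} provides counterexamples. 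For $\mu_2\in\Z$, the modules $M_{\overline{l}\in\{0,1\}}$ and $M^{\vee}_{\overline{l}\in\{0,1\}}$ are genuine $\mathfrak{sl}_3$-modules with identical simple $(h_1,h_2,f_{12}e_{12})$-spectra, the $f_{12}$- and $f_1$-recursions produce nonzero scalars slice by slice, and yet the $f_2$-relation forces $x_{k,\mu_2,m}=0$ (see \eqref{eq:equationforthexformapfromMoMvee}), so every homomorphism has a kernel and the modules are not isomorphic. So consistency genuinely depends on the specific rational coefficients, and the hypotheses $\mu_1,\mu_2\notin\Z$ must enter the verification itself, not merely the nonvanishing of recursion denominators.

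Concretely, what is missing is the check that the scalars produced by your recursions satisfy, for instance, the cross-term identity coming from $f_1$,
$-\overline{l}\,c_{k,l-1,m+1}=(m+1)\frac{(\overline{l}-1)(\overline{l}-2)}{(\overline{k}+\overline{l}-1)(\overline{k}+\overline{l}-2)}\,c_{k,l,m}$,
together with its mates from $f_2$, $e_1$, $e_2$, $e_{12}$, and that the three recursions are path-independent. The paper discharges all of this at once by exhibiting the closed-form solution \eqref{eq:xabc}, against which every relation can be substituted. Your Schur-lemma remark is fine but only upgrades a nonzero homomorphism to an isomorphism; producing the nonzero homomorphism is precisely what the deferred computation must accomplish, and no structural shortcut replaces it.
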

 
 \begin{proof}
If $\mu_1 \notin \Z$ and $\mu_2 \notin \Z$ we have for $x_{k,l,m}\in \C^{\times}$ given by
\begin{align}\label{eq:xabc}
x_{k,l,m} = (-1)^m\frac{(\overline{k}+\overline{l})^{(m)} \mu_1 \mu_2}{\overline{k}\; \overline{l}\; (m!)}
\times \begin{cases}\frac{(-\mu_1-1)^{(k)}}{(-\mu_1-\mu_2-1)^{(k)}}  & k \geq 0 \\ 
\frac{(-2-\mu_1-\mu_2)_{(-k)}}{(-2-\mu_1)_{(-k)}} & k < 0\end{cases} 
\times \begin{cases} \frac{(-\mu_2-1)^{(l)}}{(\overline{k}-\mu_2-1)^{(l)}} &  l \geq 0 \\
\frac{(\overline{k}-\mu_2-2)_{(-l)}}{(-2-\mu_2)_{(-l)}} & l < 0 \end{cases}
\end{align}
(raising respectively falling factorials in the first respectively second cases) that,
if we set $x_{k,l,m} \eta_{k,l,m}^{\prime}= \eta_{k,l,m}$,
\eqref{eq:actionofbasisofsl3onwklm} is satisfied with $\eta_{k,l,m}^{\prime}$ replacing $w_{k,l,m}$. 
 In fact, the $x_{k,l,m}$ are uniquely determined by the initial value $x_{0,0,0} = 1$ together with the 
 requirement that 
 \begin{align*}
 e_1 \eta^{\prime}_{k,l,m}= -\overline{k} \eta^{\prime}_{k-1,l,m}+ \dots\quad 
 e_2 \eta^{\prime}_{k,l,m} = -\overline{l} \eta^{\prime}_{k,l-1,m}+\dots\quad e_{12} \eta^{\prime}_{k,l,m}= -m \eta^{\prime}_{k,l,m-1}\;,
 \end{align*}
 as this fixes 
 $\frac{x_{k,l,m}}{x_{k-1,l,m}} = \frac{(\overline{k}-1)(\overline{k}-2)(\overline{k}+\overline{l}+m-1)}{\overline{k}(\overline{k}+\overline{l}-1)(\overline{k}+\overline{l}-2)}$, 
 $\frac{x_{k,l,m}}{x_{k,l-1,m}} = \frac{(\overline{l}-1)(\overline{l}-2)(\overline{k}+\overline{l}+m-1)}{\overline{l}(\overline{k}+\overline{l}-1)(\overline{k}+\overline{l}-2)}$, $\frac{x_{k,l,m}}{x_{k,l,m-1}} = -\frac{\overline{k}+\overline{l}+m-1}{m}$
(of course we need $m>0$ for the last quotient), respectively. This shows the if-part of the statement. \par
We now prove the only if-part.  We assume $\mu_1 \in \Z$. 
 It suffices to compare the expression for $e_1 \eta_{k,l,m}$
with the one for $e_1 w_{k,l,m}$. There is a $k \in \Z$ such that $\overline{k} = 0$ 
and the coefficient of $w_{k-1,l,m}$ vanishes, while
the one of $\eta_{k-1,l,m}$ does not. 
Any isomorphism of $\mathfrak{sl}_3$-modules $\phi: M_{\mu_1,\mu_2} \xrightarrow{\cong} M_{\mu_1,\mu_2}^{\vee}$ satisfies
$\phi(w_{k,l,m})= x_{k,l,m} \eta_{k,l,m}$ for some $x_{k,l,m} \in \C^{\times}$. But the
above shows that
there is no such $x_{k,l,m}$ for $\overline{k}=0$. In
the case $\mu_2 \in \Z$ one argues similarly. 
 \end{proof}
 
 \section{Subquotients of $M_{\mu_1, \mu_2}$} \label{sec:subquotients}
 
 \emph{For the rest of the article we assume $\mu_2 \in \Z$}. By our standing assumption
$\mu_1+\mu_2 \notin \Z$ it follows $\mu_1 \notin \Z$. The statements in the 
complementary case $\mu_1 \in \Z$ and $\mu_2 \notin \Z$
are of course analogous. 
 
 \begin{Not} \label{Not:Mmu1mu2S}
Let $J \subseteq \Z^2 \times \Z_{\geq 0}$ be a subset. If $M_{\mu_1, \mu_2, J} = \bigoplus_{(k,l,m) \in J}\C w_{k,l,m}$ is a $\mathfrak{sl}_3$-submodule of $M_{\mu_1, \mu_2}$, then the quotient module can be identified with $M_{\mu_1, \mu_2, (\Z^2 \times \Z_{\geq 0})\setminus J} = \bigoplus_{(k,l,m) \in  (\Z^2 \times \Z_{\geq 0})\setminus J}\C w_{k,l,m}$. More generally, any subquotient of the $\mathfrak{sl}_3$-module $M_{\mu_1, \mu_2}$ with underlying vector space $\bigoplus_{(k,l,m) \in J}\C w_{k,l,m}$
is denoted by $M_{\mu_1, \mu_2, J}$.
 In particular, the notation $M_{\mu_1, \mu_2, J}$ does not specify whether the module is a submodule, quotient,
or a more general subquotient of $M_{\mu_1, \mu_2}$. 
\end{Not}

As a consequence of Remark \autoref{Rem:simultaneousevecslieinsubmodule} any $\mathfrak{sl}_3$-submodule of $M_{\mu_1, \mu_2}$ is of the form $M_{\mu_1, \mu_2, J}$ for some $J$. 
We note that the $\mathfrak{sl}_3$-action on $M_{\mu_1, \mu_2, J}$ is obtained by truncating
the formulae \eqref{eq:actionofbasisofsl3onwklm}, i.e. by omitting the terms on the right-hand side if
the indices do not belong to $J$. We also note that the same construction applies to
$M^{\vee}_{\mu_1, \mu_2}$ instead of $M_{\mu_1, \mu_2}$, i.e. we may form $(M^{\vee}_{\mu_1, \mu_2})_J$,
and we may identify $(M^{\vee}_{\mu_1, \mu_2})_J = (M_{\mu_1, \mu_2, J})^{\vee}$.  
Thus we simply write $M^{\vee}_{\mu_1, \mu_2, J}$. 

\begin{Rem} \label{Rem:isomclassofsubquotients}
Let $\mu_2^{\prime} \in \Z$ and $\mu_1^{\prime}$ such that $\mu_1^{\prime}-\mu_1 \in \Z$. 
An isomorphism $M_{\mu_1, \mu_2} \cong M_{\mu_1^{\prime}, \mu_2^{\prime}}$ 
of $\mathfrak{sl}_3$-modules
induces an isomorphism 
of the subquotients $M_{\mu_1, \mu_2, J} \cong M_{\mu_1^{\prime}, \mu_2^{\prime},
J+(\mu_1^{\prime}-\mu_1, \mu_2^{\prime}-\mu_2, 0)}$.
\end{Rem}

\emph{We will often abbreviate $M = M_{\mu_1, \mu_2}$ from now on.}
 
 \begin{Rem} \label{Rem:subquotientsofMmu1mu2}
 We inspect \eqref{eq:actionofbasisofsl3onwklm} and conclude that
 $M$ has the following subquotients 
 
\begin{enumerate}
\item submodules $M_{\overline{l} \geq 0}$,
$M_{\overline{l}=0}$, $M_{\overline{l} \in \{0, 1\}}$,
$M_{\overline{l} \leq 1}$, $M_{\overline{l} \leq 0}$
\item quotients $M_{\overline{l} \geq 1}$, $M_{\overline{l} \geq 2}$,
 $M_{\overline{l} \leq -1}$
\item subquotient $M_{\overline{l}=1}$
that is not a submodule or quotient.
\end{enumerate}

Here we make use of Notation \autoref{Not:Mmu1mu2S},
for instance $\overline{l} \geq 0$ abbreviates the subset
\begin{align*}
J = \{(k,l,m) \in \Z^2 \times \Z_{\geq 0}\; \vert\; l \geq \mu_2\}\;.
\end{align*}
Of course, the above subquotients are related
to each other in various ways, for instance $M_{\overline{l}=1}$ is 
a submodule of $M_{\overline{l} \geq 1}$. 
Remark \autoref{Rem:isomclassofsubquotients} shows that the isomorphism
class of each these subquotients only depends on $\mu_1+\Z$ and is independent of $\mu_2$. 
\end{Rem}

\subsection{$\mathfrak{sl}_3$-action in $M_{ \overline{l} \in \{0, 1\}}$}
In $M_{\overline{l} \in \{0, 1\}}$ \eqref{eq:actionofbasisofsl3onwklm} reduces to
\begin{align} \label{eq:actiononwmu2andmu2+1}
\begin{split}
e_1 w_{k, \mu_2, m}&= -\overline{k}w_{k-1, \mu_2, m} \\
e_1 w_{k, \mu_2+1,m}&= -\overline{k} w_{k-1, \mu_2+1,m} \\
e_2 w_{k, \mu_2, m}&= m w_{k+1, \mu_2, m-1} \\
e_2 w_{k, \mu_2+1,m} &= -w_{k, \mu_2, m}+ m \frac{\overline{k}-1}{\overline{k}+1} w_{k+1, \mu_2+1, m-1}\\
f_1 w_{k, \mu_2, m} &= (\overline{k}+m) w_{k+1, \mu_2, m}\\
f_1 w_{k, \mu_2+1, m} &= \frac{(\overline{k}-1)(\overline{k}+m+1)}{\overline{k}+1} w_{k+1, \mu_2+1, m}-w_{k, \mu_2, m+1}\\
f_2 w_{k, \mu_2, m}&= \overline{k} w_{k-1, \mu_2, m+1} \\
f_2 w_{k, \mu_2+1, m}&= \overline{k}w_{k-1, \mu_2+1, m+1}\\
e_{12} w_{k, \mu_2, m} &= -m w_{k, \mu_2, m-1} \\
e_{12} w_{k, \mu_2+1, m}&= -m w_{k, \mu_2+1, m-1}\\
f_{12} w_{k, \mu_2, m} &= (\overline{k}+m) w_{k, \mu_2, m+1} \\
f_{12} w_{k, \mu_2+1, m} &= (\overline{k}+m+1) w_{k, \mu_2+1, m+1}\;.
\end{split}
\end{align}

\subsection{$\mathfrak{sl}_3$-action in $M^{\vee}_{\overline{l} \in \{0, 1\}}$}
In $M^{\vee}_{\overline{l} \in \{0, 1\}}$ \eqref{eq:actiononetaabc}
reduces to 
\begin{align}\label{eq:actiononetamu2andmu2+1}
\begin{split}
e_1 \eta_{k, \mu_2, m} &= -(\overline{k}+m-1) \eta_{k-1,\mu_2,m}+\begin{cases}\eta_{k, \mu_2+1, m-1} & m > 0 \\ 0 & m=0\end{cases} \\
e_1 \eta_{k, \mu_2+1, m} &= -\frac{(\overline{k}-2)(\overline{k}+m)}{\overline{k}} \eta_{k-1, \mu_2+1, m}\\
e_2 \eta_{k, \mu_2, m}&= -\begin{cases} (\overline{k}+1) \eta_{k+1, \mu_2, m-1} & m > 0 \\0 & m = 0\end{cases} \\
e_2 \eta_{k, \mu_2+1,m} &= - \begin{cases}(\overline{k}+1) \eta_{k+1, \mu_2+1,m-1}& m > 0 \\0 & m=0\end{cases}\\
f_1 \eta_{k, \mu_2, m} &= (\overline{k}+1)\eta_{k+1, \mu_2, m} \\
f_1 \eta_{k, \mu_2+1, m} &= (\overline{k}+1)\eta_{k+1, \mu_2+1,m} \\
f_2 \eta_{k, \mu_2, m} &= \eta_{k, \mu_2+1,m}-(m+1)\eta_{k-1, \mu_2, m+1} \\
f_2 \eta_{k, \mu_2+1,m} &= -(m+1)\frac{\overline{k}-2}{\overline{k}} \eta_{k-1,\mu_2+1,m+1} \\
e_{12}\eta_{k, \mu_2, m} &= \begin{cases}(\overline{k}+m-1)\eta_{k, \mu_2, m-1} & m > 0 \\ 0 & m=0\end{cases}\\
e_{12} \eta_{k, \mu_2+1, m} &= \begin{cases}(\overline{k}+m) \eta_{k, \mu_2+1, m-1} & m > 0 \\ 0 & m=0\end{cases} \\
f_{12} \eta_{k, \mu_2, m} &= -(m+1) \eta_{k, \mu_2, m+1}\\
f_{12}\eta_{k, \mu_2+1, m} &= -(m+1) \eta_{k, \mu_2+1, m+1}\;.
\end{split}
\end{align}

\subsection{Structure of $M_{\overline{l} \geq 2}$, $M_{\overline{l} \geq 1}$, $M_{\overline{l} \geq 0}$, $M_{\overline{l} = 0}$, $M_{\overline{l}=1}$, $M_{\overline{l} \in \{0, 1\}}$}

The proof of the following proposition is similar to the one of Theorem \autoref{Thm:Mmu1mu2simplicitycriterion}. 

 \begin{Prop}\label{Prop:Mlgeqmu2+2eqmu2eqmu2+1simple}
 $M_{\overline{l} \geq 2}$, $M_{\overline{l} = 0}$, $M_{\overline{l}= 1}$  are simple $\mathfrak{sl}_3$-modules. 
 \end{Prop}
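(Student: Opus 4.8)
The plan is to follow, for each of the three modules, the scheme used in the proof of \autoref{Thm:Mmu1mu2simplicitycriterion}. Throughout I would exploit that under our standing assumptions $\mu_2 \in \Z$ and $\mu_1 \notin \Z$ the quantity $\overline{l} = l-\mu_2$ is an integer while $\overline{k} = k-\mu_1$ is never an integer; consequently $\overline{k}$, $\overline{k}-1$, and every expression of the form $\overline{k}+\overline{l}+m+c$ with $c \in \Z$ is nonzero. These nonvanishing statements are exactly what make the coefficients in \eqref{eq:actionofbasisofsl3onwklm} and \eqref{eq:actiononwmu2andmu2+1} nonzero in the ranges I need. In each case I would take $v \neq 0$ in the module, let $S$ be the $\mathfrak{sl}_3$-submodule it generates, invoke \autoref{Rem:simultaneousevecslieinsubmodule} to place some basis vector $w_{k_0,l_0,m_0}$ in $S$, and then show that this single vector generates the whole module.

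For the one-layer modules $M_{\overline{l}=0}$ and $M_{\overline{l}=1}$ this is immediate once one observes from \eqref{eq:actiononwmu2andmu2+1} (after discarding the $\overline{l}=0$ terms in the case of $M_{\overline{l}=1}$, which is a subquotient, not a submodule) that every generator acts on the surviving basis by a \emph{single} term. Thus I would first apply $e_{12}$ repeatedly to reach $w_{k_0,l_0,0}\in S$, then move freely in $k$ using $e_1$ and $f_1$ (the coefficients being $-\overline{k}$ and, at $m=0$, $\overline{k}$ respectively $\overline{k}-1$, all nonzero) to obtain $w_{k,l_0,0}\in S$ for every $k \in \Z$, and finally climb in $m$ by $f_{12}$ (coefficient $\overline{k}+m$ respectively $\overline{k}+m+1$, always nonzero). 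Since these modules consist of a single $\overline{l}$-layer, this exhausts them.

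For $M_{\overline{l}\geq 2}$ the argument is closest to \autoref{Thm:Mmu1mu2simplicitycriterion}, but now $l$ is bounded below by $\overline{l}=2$ and two of the relevant relations carry two terms. I would proceed along the chain $e_{12},e_1,e_2,f_{12},f_2,f_1$: apply $e_{12}$ to get $w_{k_0,l_0,0}$; apply $e_1$ (single term at $m=0$) to get $w_{k,l_0,0}$ for $k \leq k_0$; apply $e_2$ (single term at $m=0$, coefficient $-\overline{l}$, which stalls at $\overline{l}=2$ because the lower term is truncated out of the module) to get $w_{k,l,0}$ for $k \leq k_0$, $2\leq \overline{l}\leq \overline{l_0}$; then apply $f_{12}$ to fill in all $m$ on this range. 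The decisive point, exactly as in \autoref{Thm:Mmu1mu2simplicitycriterion}, is that when the two-term operators $f_2$ and $f_1$ are then used to spread in $l$ and in $k$, the unwanted second term is one already known to lie in $S$ and may be subtracted off. Concretely, $f_2 w_{k,l,m}$ has second term $\overline{k}\,w_{k-1,l,m+1}$, of smaller $k$ and hence already in $S$; subtracting it isolates $w_{k,l+1,m}$ with nonzero coefficient (since $\overline{l},\overline{l}-1>0$), and induction on $l$ gives $w_{k,l,m}\in S$ for all $k\leq k_0$, $\overline{l}\geq 2$, all $m$. Finally $f_1 w_{k_0,l,m}$ has second term $-\overline{l}\,w_{k_0,l-1,m+1}$, which for $\overline{l}\geq 3$ already lies in $S$ and for $\overline{l}=2$ is truncated to zero in $M_{\overline{l}\geq 2}$; in either case the relation isolates $w_{k_0+1,l,m}$ with nonzero coefficient, and induction on $k$ then produces every $w_{k,l,m}$.

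I expect the only genuinely delicate point to be the bookkeeping at the boundary $\overline{l}=2$ of $M_{\overline{l}\geq 2}$, where one must track precisely which terms survive the truncation defining the subquotient. Once that is in hand, the nonvanishing of coefficients forced by $\mu_1 \notin \Z$ makes every step go through, and the three modules are simple.
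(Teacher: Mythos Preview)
Your proposal is correct and follows essentially the same approach as the paper's proof: in each case one invokes \autoref{Rem:simultaneousevecslieinsubmodule} to land on a single basis vector and then propagates through the basis using \eqref{eq:actionofbasisofsl3onwklm} and \eqref{eq:actiononwmu2andmu2+1}, with the nonvanishing of coefficients guaranteed by $\mu_1 \notin \Z$. The only cosmetic difference is the order of the last two steps for $M_{\overline{l}\geq 2}$ (you spread in $l$ via $f_2$ before spreading in $k$ via $f_1$, whereas the paper does $f_1$ first and $f_2$ second), and for the single-layer modules you move in $k$ at level $m=0$ before applying $f_{12}$ rather than after; both orderings work for the same reason.
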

 
 \begin{proof}
 Let $S$ be a $\mathfrak{sl}_3$-submodule of the module under consideration. \\
(1) We look at \eqref{eq:actionofbasisofsl3onwklm}.
We assume for some $k_0 \in \Z, \overline{l_0} \geq 2, m_0 \geq 0$ that $w_{k_0, l_0, m_0} \in S$. We act with $e_{12}$ and find $w_{k_0, l_0, 0}\in S$.
We then act with $e_1$ and $e_2$ to find $w_{k,l,0} \in S$, $k \leq k_0, 2 \leq \overline{l} \leq \overline{l_0}$.  
We then act with $f_{12}$ to find $w_{k,l,m} \in S$, $k \leq k_0, 2 \leq \overline{l} \leq \overline{l_0}, m \geq 0$. We have for $\overline{l} \geq 2$
\begin{align*}
C w_{k+1,l,m} = f_1 w_{k,l,m}+ \begin{cases}\overline{l}w_{k,l-1,m+1} & \overline{l} \geq 3 \\ 0 & \overline{l} = 2\end{cases}
\end{align*}
with $C \neq 0$ and hence $w_{k_0+1,l,m}, w_{k_0+2,l,m}, \dots \in S$. Thus $w_{k,l,m} \in S$, $k \in \Z, 2 \leq \overline{l} \leq \overline{l_0}, m \geq 0$. 
Finally, we have for $\overline{l} \geq 2$
\begin{align*}
C w_{k, l+1, m}= f_2 w_{k,l,m}-\overline{k} w_{k-1,l, m+1}
\end{align*}
with $C \neq 0$ and hence $w_{k,l_0+1,m}, w_{k, l_0+2, m}, \dots \in S$. Thus $w_{k,l,m} \in S$, $k \in \Z, \overline{l} \geq 2, m \geq 0$. 
Thus $S=M_{\overline{l} \geq 2}$. \\

(2) We find the $\mathfrak{sl}_3$-action in $M_{\overline{l}=0}$
from \eqref{eq:actiononwmu2andmu2+1}
\begin{align*}
\begin{split}
e_1 w_{k, \mu_2, m} &= -\overline{k} w_{k-1, \mu_2, m} \\
e_2 w_{k, \mu_2, m} &= m w_{k+1, \mu_2, m-1} \\
f_1 w_{k, \mu_2, m} &= (\overline{k}+m)w_{k+1, \mu_2, m}\\
f_2 w_{k, \mu_2, m} &= \overline{k}w_{k-1, \mu_2, m+1}\\
e_{12} w_{k, \mu_2, m} &= -m w_{k, \mu_2, m-1} \\
f_{12} w_{k, \mu_2, m} &= (\overline{k}+m)w_{k, \mu_2, m+1}\;.
\end{split}
\end{align*}
Let $w_{k_0, \mu_2, m_0} \in S$. Acting with $e_{12}$ and $f_{12}$ we find $w_{k_0,\mu_2, m} \in S$, $m \geq 0$.
Acting with $f_1$ we find $w_{k, \mu_2, m} \in S$, $k \geq k_0, m \geq 0$. Acting with $e_1$ we find $w_{k,\mu_2, m} \in S$,
$k \in \Z, m \geq 0$. \\

(3) We find the $\mathfrak{sl}_3$-action in $M_{\overline{l}=1}$ from \eqref{eq:actiononwmu2andmu2+1}
\begin{align*}
e_1 w_{k, \mu_2+1,m} &= -\overline{k} w_{k-1, \mu_2+1,m}\\
e_2 w_{k, \mu_2+1,m} &= m \frac{\overline{k}-1}{\overline{k}+1}w_{k+1,\mu_2+1,m-1}\\
f_1 w_{k, \mu_2+1,m} &= \frac{(\overline{k}-1)(\overline{k}+m+1)}{\overline{k}+1}w_{k+1, \mu_2+1,m}\\
f_2 w_{k, \mu_2+1,m} &= \overline{k} w_{k-1, \mu_2+1, m+1}\\
e_{12} w_{k, \mu_2+1,m} &= -m w_{k, \mu_2+1, m-1}\\
f_{12} w_{k, \mu_2+1, m} &= (\overline{k}+m+1)w_{k, \mu_2+1,m+1}\;.
\end{align*}
Let $w_{k_0, \mu_2+1, m_0} \in S$. Acting with $e_{12}$ and $f_{12}$ we find $w_{k_0, \mu_2+1, m} \in S$, $m \geq 0$.
Acting with $f_1$ we find $w_{k, \mu_2+1,m} \in S$, $k \geq k_0, m \geq 0$. Acting with $e_1$ we find $w_{k, \mu_2+1, m} \in S$, $k \in \Z, m \geq 0$. 
 \end{proof}

\begin{Prop}\label{Prop:structureofMbgeqmu2etc}

Given a homomorphism $\phi$ respectively $\psi$ of $\mathfrak{sl}_3$-modules we associate $x_{k,l,m} \in \C$ defined by $\phi(\eta_{k,l,m}) = x_{k,l,m} w_{k,l,m}$ respectively $\psi(w_{k,l,m}) = x_{k,l,m} \eta_{k,l,m}$ if $\eta_{k,l,m}$ respectively $w_{k,l,m}$ form a basis
of the domain of $\phi$ respectively $\psi$. 

\begin{enumerate}

\item 
We have an isomorphism 
\begin{align*}
\Hom(M_{\overline{l} \in \{0, 1\}}^{\vee}, M_{\overline{l} \in \{0, 1\}}) \xrightarrow{\cong} \C,\; \phi \mapsto x_{0, \mu_2, 0}\;.
\end{align*}
If $\phi \neq 0$ we have $\Imag \phi = M_{\overline{l}=0}$, $\ker \phi = M^{\vee}_{\overline{l}=1}$,
and $\phi$ induces an isomorphism $M^{\vee}_{\overline{l}=0} \cong M_{\overline{l}=0}$ of $\mathfrak{sl}_3$-modules. 
We have an isomorphism 
\begin{align*}
\Hom(M_{\overline{l} \in \{0, 1\}}, M_{\overline{l} \in \{0, 1\}}^{\vee}) \xrightarrow{\cong} \C,\; \psi \mapsto x_{0, \mu_2+1, 0}\;.
\end{align*}
If $\psi \neq 0$ we have $\Imag \psi = M_{\overline{l}=1}^{\vee}$, $\ker \psi = M_{\overline{l}=0}$, 
and $\psi$ induces an isomorphism $M_{\overline{l}=1} \cong M^{\vee}_{\overline{l}=1}$ of $\mathfrak{sl}_3$-modules.  

\item
We have an isomorphism
\begin{align*}
\Hom(M_{\overline{l} \geq 1}^{\vee}, M_{\overline{l} \geq 1}) \xrightarrow{\cong} \C,\; \phi \mapsto x_{0, \mu_2+1,0}\;.
\end{align*}
For $\phi \neq 0$ we have $\Imag \phi = M_{\overline{l} = 1}$,
$\ker \phi = M^{\vee}_{\overline{l} \geq 2}$,  and $\phi$ induces an $M^{\vee}_{\overline{l}=1} \cong M_{\overline{l}=1}$. We have an isomorphism
\begin{align*}
\Hom(M_{\overline{l} \geq 1}, M_{\overline{l} \geq 1}^{\vee})
\xrightarrow{\cong} \C,\; \psi \mapsto x_{0, \mu_2+2, 0}\;.
\end{align*}
For $\psi \neq 0$ we have $\Imag \psi = M_{\overline{l} \geq 2}^{\vee}$,
$\ker \psi = M_{\overline{l}=1}$, and $\psi$ induces an isomorphism $M_{\overline{l} \geq 2} \cong M^{\vee}_{\overline{l} \geq 2}$. 

\item We have an isomorphism
\begin{align*}
\Hom(M_{\overline{l} \geq 0}^{\vee}, M_{\overline{l} \geq 0}) \xrightarrow{\cong} \C,\; \phi \mapsto x_{0, \mu_2,0}\;.
\end{align*}
For $\phi \neq 0$ we have $\Imag \phi = M_{\overline{l} = 0}$, $\ker \phi = M^{\vee}_{\overline{l} \geq 1}$,  and $\phi$ induces an isomorphism $M^{\vee}_{\overline{l}=0} \cong M_{\overline{l}=0}$. We have
an isomorphism
\begin{align*}
\Hom(M_{\overline{l} \geq 0}, M_{\overline{l} \geq 0}^{\vee})
\xrightarrow{\cong} \C,\; \psi \mapsto x_{0, \mu_2+2, 0}\;.
\end{align*}
For $\psi \neq 0$ we have $\Imag \psi = M_{\overline{l} \geq 2}^{\vee}$,
$\ker \psi = M_{\overline{l} \in \{0, 1\}}$,
and $\psi$ induces an isomorphism $M_{\overline{l} \geq 2} \cong M_{\overline{l} \geq 2}^{\vee}$. 
\end{enumerate}

\end{Prop}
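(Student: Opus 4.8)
The plan is to exploit the Gelfand--Tsetlin structure to reduce each of the six Hom-spaces to a system of scalar recurrences. Fix one of the subquotients, say $N = M_{\overline{l}\in\{0,1\}}$, $M_{\overline{l}\geq 1}$ or $M_{\overline{l}\geq 0}$, and consider a homomorphism $\phi\colon N^{\vee}\to N$. Since $\phi$ commutes with the action of $h_1,h_2,f_{12}e_{12}$, it preserves the simultaneous eigenspace decomposition; by the Lemma establishing that the $w_{k,l,m}$ are simultaneous $(h_1,h_2,f_{12}e_{12})$-eigenvectors with pairwise distinct eigenvalues, these eigenspaces are the lines $\C\eta_{k,l,m}$ in the source and $\C w_{k,l,m}$ in the target. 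Hence $\phi$ is automatically diagonal, $\phi(\eta_{k,l,m}) = x_{k,l,m}w_{k,l,m}$, and likewise $\psi(w_{k,l,m})=x_{k,l,m}\eta_{k,l,m}$ in the opposite direction. Thus determining each Hom-space amounts to finding all families $(x_{k,l,m})$ for which the diagonal map intertwines the six generators.

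First I would impose the intertwining relations generator by generator, reading the coefficients off \eqref{eq:actiononwmu2andmu2+1} and \eqref{eq:actiononetamu2andmu2+1} for part (1), and off the appropriate truncations of \eqref{eq:actionofbasisofsl3onwklm} and \eqref{eq:actiononetaabc} for parts (2) and (3). The key phenomenon is an asymmetry between the module action and the dual action: in \eqref{eq:actionofbasisofsl3onwklm} the term raising $l$ in $f_2$ carries the factor $\overline{l}(\overline{l}-1)$, which vanishes at $\overline{l}\in\{0,1\}$, whereas the corresponding term in the dual action \eqref{eq:actiononetaabc} carries the factor $\overline{l}+1$, which does not. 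Concretely, for $\phi\colon M_{\overline{l}\in\{0,1\}}^{\vee}\to M_{\overline{l}\in\{0,1\}}$ the relation $\phi(f_2\eta_{k,\mu_2,m}) = f_2\phi(\eta_{k,\mu_2,m})$ carries a $w_{k,\mu_2+1,m}$-term on the left, with coefficient $x_{k,\mu_2+1,m}$, and none on the right, forcing $x_{k,\mu_2+1,m}=0$; an induction on $\overline{l}$ using the same generator then propagates such vanishings. This is what pins down $\ker\phi$ and $\Imag\phi$ in each case: for part (2) it forces $x_{k,l,m}=0$ for $\overline{l}\geq 2$, for part (3) the $\psi$-direction kills $\overline{l}\in\{0,1\}$, and so on.

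On the complementary region the remaining relations, those coming from $e_1,f_1,e_{12},f_{12}$, become ratio recurrences of the form $x_{k,l,m}/x_{k-1,l,m}$, $x_{k,l,m}/x_{k,l,m-1}$, exactly as in the proof of Theorem \autoref{Thm:M-mu1mu2congMveemu1mu2}. I would solve these to see that the surviving $x$'s are determined by a single value ($x_{0,\mu_2,0}$, $x_{0,\mu_2+1,0}$ or $x_{0,\mu_2+2,0}$ depending on the case), which simultaneously shows the Hom-space is one-dimensional and identifies it with $\C$ via the stated evaluation; conversely the scalars produced by the recurrence genuinely define a homomorphism, since matching every generator relation is precisely the content of the recurrence. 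Because $\mu_1\notin\Z$ and $\mu_1+\mu_2\notin\Z$, all factors entering these ratios, such as $\overline{k}$, $\overline{k}+m-1$ and the denominators $\overline{k}+\overline{l}-1$, $\overline{k}+\overline{l}-2$, are nonzero, so every surviving $x_{k,l,m}$ is nonzero. Hence the restriction of a nonzero $\phi$ to the surviving row(s) is a diagonal map with nonvanishing entries, giving the asserted isomorphism of subquotients, each of which is simple by Proposition \autoref{Prop:Mlgeqmu2+2eqmu2eqmu2+1simple}; the kernel and image then follow from the vanishing pattern together with the first isomorphism theorem.

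The main obstacle I anticipate is the bookkeeping in the middle step: the intertwining conditions are a priori over-determined, and one must check that the forced vanishings and the ratio recurrences are mutually consistent, rather than collapsing everything to zero, and that the propagation of vanishing reaches every intended index. The non-integrality hypotheses $\mu_1\notin\Z$ and $\mu_1+\mu_2\notin\Z$ are exactly what guarantees both the solvability of the ratio recurrences and the non-vanishing of the surviving coefficients, and keeping careful track of which factor switches on or off at the boundary values $\overline{l}=0,1,2$ is where the argument must be carried out with care.
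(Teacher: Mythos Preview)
Your proposal is correct and follows essentially the same approach as the paper: reduce to diagonal maps via the distinct Gelfand--Tsetlin eigenvalues, use the asymmetry between the $w$- and $\eta$-formulae at the boundary values of $\overline{l}$ to force vanishing on part of the index set, and then solve ratio recurrences on the remainder. The only cosmetic differences are that the paper extracts the vanishing in part (1) from the $e_1$-relation rather than $f_2$ (both work), organizes parts (2) and (3) by explicitly factoring $\phi$ and $\psi$ through the quotient/inclusion diagrams rather than phrasing it as an induction on $\overline{l}$, and records closed-form solutions for the surviving $x_{k,l,m}$ rather than leaving the recurrence abstract.
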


We note that from (1) it is clear that $M_{\overline{l} \in \{0, 1\}} \ncong M^{\vee}_{\overline{l} \in \{0, 1\}}$,
and from (2) and (3) we conclude the analogous statement for $M_{\overline{l} \geq 1}$ and
$M_{\overline{l} \geq 0}$. 

\begin{proof}
(1) Let $\phi: M^{\vee}_{\overline{l} \in \{0, 1\}} \rightarrow M_{\overline{l} \in \{0, 1\}}$. 
We deduce from \eqref{eq:actiononwmu2andmu2+1} and \eqref{eq:actiononetamu2andmu2+1} with $k \in \Z$ and, 
unless
stated otherwise, $m \geq 0$ 

\begin{align}
\begin{split}\label{eq:equationsforthex}
e_1 \phi(\eta_{k, \mu_2, m}):\; & -\overline{k}x_{k, \mu_2, m}= -(\overline{k}+m-1)x_{k-1, \mu_2, m}\quad 0 = x_{k, \mu_2+1, m-1}, m>0\\
e_1 \phi(\eta_{k, \mu_2+1, m}):\;  & -\overline{k} x_{k, \mu_2+1,m} = -\frac{(\overline{k}-2)(\overline{k}+m)}{\overline{k}} x_{k-1, \mu_2+1,m}\; \\
e_2 \phi(\eta_{k, \mu_2, m}):\; & m x_{k, \mu_2, m} = -(\overline{k}+1)x_{k+1, \mu_2, m-1}, m > 0 \\
e_2 \phi(\eta_{k, \mu_2+1,m}):\; & -x_{k, \mu_2+1,m} = 0\quad -(\overline{k}+1)x_{k+1, \mu_2+1, m-1} = m \frac{\overline{k}-1}{\overline{k}+1} x_{k, \mu_2+1,m}, m > 0\\
f_1 \phi(\eta_{k, \mu_2, m}):\; & (\overline{k}+m)x_{k, \mu_2, m} = (\overline{k}+1)x_{k+1, \mu_2, m} \\
f_1 \phi(\eta_{k, \mu_2+1, m}):\; & \frac{(\overline{k}-1)(\overline{k}+m+1)}{\overline{k}+1} x_{k, \mu_2+1,m} = (\overline{k}+1)x_{k+1, \mu_2+1,m} \quad -x_{k, \mu_2+1, m} = 0\\
f_2 \phi(\eta_{k, \mu_2, m}):\; & \overline{k} x_{k, \mu_2, m} = -(m+1)x_{k-1, \mu_2, m+1}\quad
x_{k, \mu_2+1, m} = 0 \\
f_2 \phi(\eta_{k, \mu_2+1,m}):\; &
\overline{k}x_{k, \mu_2+1, m} = -(m+1) \frac{\overline{k}-2}{\overline{k}} x_{k-1, \mu_2+1, m+1} \\
e_{12} \phi(\eta_{k, \mu_2, m}):\; & -m x_{k, \mu_2, m} = (\overline{k}+m-1)x_{k, \mu_2, m-1}, m > 0\\
e_{12}\phi(\eta_{k, \mu_2+1,m}):\; & -m x_{k, \mu_2+1,m} = (\overline{a}+m)x_{k, \mu_2+1, m-1}, m > 0 \\
f_{12}\phi(\eta_{k, \mu_2,m}):\; & (\overline{k}+m) x_{k, \mu_2, m} = -(m+1) x_{k, \mu_2, m+1}\\
f_{12} \phi(\eta_{k, \mu_2+1,m}):\; & (\overline{k}+m+1)x_{k, \mu_2+1,m} = -(m+1) x_{k, \mu_2+1, m+1}\;.
\end{split}
\end{align}

From the first line we see that $x_{k, \mu_2+1, m} = 0$ for $k \in \Z, m \geq 0$ and it is easy to show that for 
$k \in \Z$
\begin{align}\label{eq:solutionxamu2c}
x_{k, \mu_2,m} = -\mu_1 x_{0, \mu_2, 0}\begin{cases}\frac{1}{\overline{k}} & m = 0 \\ (-1)^m \frac{(\overline{k}+1)^{(m-1)}}{m!} & m \geq 1 \end{cases}
\end{align} 
is the unique solution of \eqref{eq:equationsforthex} with $x_{0, \mu_2, 0} \in \C$ arbitrary. 
This proves the statement about $\Hom(M_{\overline{l} \in \{0, 1\}}^{\vee}, M_{\overline{l} \in \{0, 1\}})$. \par

We now consider a map in the opposite direction
$\psi: M_{\overline{l} \in \{0, 1\}} \rightarrow M_{\overline{l} \in \{0, 1\}}^{\vee}$.
We proceed completely parallely. 
From \eqref{eq:actiononwmu2andmu2+1} and \eqref{eq:actiononetamu2andmu2+1} we get
\begin{align}\label{eq:equationforthexformapfromMoMvee}
\begin{split}
e_1 \psi(w_{k, \mu_2, m}):\; & -(\overline{k}+m-1)x_{k, \mu_2, m} =-\overline{a} x_{k-1, \mu_2, m}  \quad x_{k, \mu_2, m} = 0, m > 0 \\
e_1 \psi(w_{k, \mu_2+1, m}):\; & -\frac{(\overline{k}-2)(\overline{k}+m)}{\overline{k}} x_{k, \mu_2+1,m} = -\overline{k}x_{k-1, \mu_2+1,m}\\
e_2 \psi(w_{k, \mu_2, m}):\; & -(\overline{k}+1)x_{k, \mu_2, m} = m x_{k+1, \mu_2, m-1},\; m > 0\\
e_2 \psi(w_{k, \mu_2+1,m}):\; & -x_{k,\mu_2,m} = 0 \quad -(\overline{k}+1) x_{k, \mu_2+1,m} = m \frac{\overline{k}-1}{\overline{k}+1}x_{k+1, \mu_2+1, m-1},\; m > 0 \\
f_1 \psi(w_{k, \mu_2, m}):\; & (\overline{k}+1) x_{k, \mu_2, m} = (\overline{k}+m)x_{k+1, \mu_2, m}\\
f_1 \psi(w_{k, \mu_2+1,m}):\; & (\overline{k}+1)x_{k, \mu_2+1,m} = \frac{(\overline{k}-1)(\overline{k}+m+1)}{\overline{k}+1}x_{k+1, \mu_2+1,m}\quad x_{k, \mu_2, m+1} = 0 \\
f_2 \psi(w_{k, \mu_2, m}):\; & x_{k, \mu_2, m} = 0 \quad -(m+1)x_{k, \mu_2,m} = \overline{k} x_{k-1, \mu_2, m+1} \\
f_2 \psi(w_{k,\mu_2+1,m}):\; & -(m+1) \frac{\overline{k}-2}{\overline{k}} x_{k, \mu_2+1, m} = \overline{k} x_{k-1, \mu_2+1, k+1} \\
e_{12} \psi(w_{k, \mu_2, m}):\; & (\overline{k}+m-1) x_{k, \mu_2, m} = -m x_{k, \mu_2, m-1}, m > 0\\
e_{12} \psi(w_{k, \mu_2+1,m}):\; & (\overline{k}+m)x_{k, \mu_2+1,m} = -m x_{k, \mu_2+1,m-1}, m >0\\
f_{12}\psi(w_{k, \mu_2, m}):\; & -(m+1)x_{k, \mu_2, m} = (\overline{k}+m)x_{k, \mu_2, m+1} \\
f_{12}\psi(w_{k, \mu_2+1,m}):\; & (\overline{k}+m+1)x_{k, \mu_2+1,m+1} = -(m+1)x_{k, \mu_2+1,m} 
\end{split}
\end{align}
From $e_2 \psi(w_{k, \mu_2+1, m})$ we see that $x_{k, \mu_2, m} = 0$ for $k \in \Z, m \geq 0$ and it is easy to show that
for $k \in \Z, m \geq 0$ 
\begin{align} \label{eq:solutionxamu2+1c}
x_{k, \mu_2+1,m} = (-1)^m \frac{m!}{(\overline{k}+1)^{(m)}} \frac{\overline{k}(\overline{k}-1)}{\mu_1 (\mu_1+1)} x_{0, \mu_2+1,0}
\end{align}
is the unique solution to \eqref{eq:equationforthexformapfromMoMvee}, where $x_{0, \mu_2+1,0} \in \C$ is arbitrary.  \\

(2) We consider $\phi: M^{\vee}_{\overline{l} \geq 1} \rightarrow M_{\overline{l} \geq 1}$. 
From \eqref{eq:actionofbasisofsl3onwklm} and \eqref{eq:actiononetaabc} we see that the coefficients of $f_2\phi(\eta_{k,l,m})$ are given by
\begin{align}\label{eq:coeffoff2phietaabc}
\begin{split}
& \frac{\overline{l}(\overline{l}-1)(\overline{k}+\overline{l}+m)}{(\overline{k}+\overline{l})(\overline{k}+\overline{l}-1)}x_{k,l,m}
= (\overline{l}+1)x_{k,l+1,m}\\
& \overline{k}x_{k,l,m} = -(m+1) \frac{(\overline{k}-1)(\overline{k}-2)}{(\overline{k}+\overline{l}-1)(\overline{k}+\overline{l}-2)}x_{k-1,l,m+1}\;.
\end{split}
\end{align}  
The first equation implies $x_{k,l,m} = 0$ for $k \in \Z, \overline{l} \geq 2, m \geq 0$, and hence the map factors through
\begin{align*}
\xymatrix{ M_{\overline{l} \geq 1}^{\vee} \ar[r]^{\phi}\ar[d] & M_{\overline{l} \geq 1} \\
M_{\overline{l}=1}^{\vee}\ar[r]^{\overline{\phi}} & M_{\overline{l}=1} \ar[u]
}\;.
\end{align*}
Here the vertical maps are the quotient and inclusion map. 
The map $\overline{\phi}$ is determined as part of the analysis of the proof of (1), see \eqref{eq:solutionxamu2+1c}, hence the statement.  \par

We now consider a map in the opposite direction $\psi: M_{\overline{l} \geq 1}
\rightarrow M^{\vee}_{\overline{l} \geq 1}$. 
From \eqref{eq:actionofbasisofsl3onwklm} and \eqref{eq:actiononetaabc} we see that for $k \in \Z, m \geq 0$ the coefficients of $f_2 \psi(w_{k, \mu_2+1, m})$ are given by

\begin{align*}
x_{k, \mu_2+1,m} = 0\quad -(m+1) \frac{\overline{k}-2}{\overline{k}}x_{k, \mu_2+1, m} = \overline{k} x_{k-1, \mu_2+1,m+1} 
\end{align*}
Hence $x_{k, \mu_2+1,m} = 0$ for $k \in \Z, m \in \Z_{\geq 0}$, and the map $\psi$ factors through 
\begin{align*}
\xymatrix{ M_{\overline{l} \geq 1} \ar[d] \ar[r]^{\psi} & M^{\vee}_{\overline{l} \geq 1} \\
M_{\overline{l} \geq 2} \ar[r]^{\overline{\psi}} & M^{\vee}_{\overline{l} \geq 2} \ar[u]}\;.
\end{align*}
Here the vertical maps are the quotient and inclusion map. 
By a similar analysis as in the proof of (1) the map $\overline{\psi}$ is uniquely determined by $x_{0, \mu_2+2, 0} \in \C$, namely we have for $k \in \Z, \overline{l} \geq 2, m \geq 0$

\begin{align}\label{eq:xklmforlgemu2+2}
x_{k,l,m} = \frac{(-1)^m m!}{(\overline{k}+\overline{l})^{(m)}}\; \; \left(-\frac{\overline{k}}{\mu_1}\right) x_{0, \mu_2+2,0} \begin{cases}\frac{(\overline{l}-\mu_1-1)^{(k)}}{(-\mu_1-1)^{(k)}} & k \geq 0 \\ \frac{(-\mu_1-2)_{(-k)}}{(\overline{l}-\mu_1-2)_{(-k)}} & k < 0 \end{cases} \times \begin{cases} \frac{\overline{l}\; (-\mu_1+1)^{(\overline{l}-2)}}{2(\overline{l}-2)(2 \cdots (\overline{l}-3))} & \overline{l} \geq 3 \\ 1 & \overline{l}=2\end{cases}\;.
\end{align}
In the case $\overline{l} \geq 3$  we understand $(2 \cdots (\overline{l}-3)) = 1$ if $\overline{l} < 5$.
\\

(3) We consider $\phi: M_{\overline{l} \geq 0}^{\vee}\rightarrow M_{\overline{l} \geq 0}$. 
The following argument is analogous to the one for the corresponding map in (2).  
Namely, the first equation in \eqref{eq:coeffoff2phietaabc} implies $x_{k, l, m} = 0$ for $\overline{l} \geq 1$.
Hence $\phi$ factors through
\begin{align*}
\xymatrix{ M_{\overline{l} \geq 0}^{\vee} \ar[r]^{\phi} \ar[d] & M_{\overline{l} \geq 0} \\
M_{\overline{l}=0}^{\vee} \ar[r]^{\overline{\phi}} & M_{\overline{l} = 0} \ar[u]}\;.
\end{align*}
The map $\overline{\phi}$ is determined as part of the analysis of the proof of (1), see \eqref{eq:solutionxamu2c}. \par

 We now consider $\psi: M_{\overline{l} \geq 0} \rightarrow M^{\vee}_{\overline{l} \geq 0}$. 
 The following argument is analogous to the one for the corresponding map in (2). 
 For $k \in \Z, m \geq 0$ we see from \eqref{eq:actionofbasisofsl3onwklm} and \eqref{eq:actiononetaabc} that the coefficients of $f_2 \psi(w_{k, \mu_2, m})$ and
 $f_2 \psi(w_{k, \mu_2+1, m})$ are given by
\begin{align*}
& x_{k, \mu_2, m} = 0\qquad -(m+1) x_{k, \mu_2, m} = \overline{k} x_{k-1, \mu_2, m+1}\\
& x_{k, \mu_2+1, m}= 0\qquad -(m+1) \frac{\overline{k}-2}{\overline{k}} x_{k, \mu_2+1, m} = \overline{k} x_{k-1, \mu_2+1, m+1}\;.
\end{align*}
Hence the map $\psi$ factors through
\begin{align*}
\xymatrix{ M_{\overline{l} \geq 0} \ar[d] \ar[r]^{\psi} & M^{\vee}_{\overline{l} \geq 0} \\
M_{\overline{l} \geq 2} \ar[r]^{\overline{\psi}} & M^{\vee}_{\overline{l} \geq 2} \ar[u]}\;.
\end{align*}
The map $\overline{\psi}$ again satisfies \eqref{eq:xklmforlgemu2+2}
and in particular is uniquely determined by $x_{0, \mu_2+2, 0} \in \C$. 
\end{proof}

\subsubsection{Identification with relaxed highest weight $\mathfrak{sl}_3$-modules}

For $\lambda \in \mathfrak{t}^*$ and $\mu \in \C$ we consider the relaxed Verma 
$\mathfrak{sl}_3$-module 

\begin{align*}
\RVerma_{\mathfrak{p}_1,\lambda,\mu} = \Uea \mathfrak{sl}_3 \otimes_{\Uea \mathfrak{p}_1}(\RVerma^{\mathfrak{sl}_2^{(1)}}_{\lambda(h_1), \mu} \otimes \C_{\lambda})
\end{align*}

as defined e.g. in \cite{Eic16b}.
Here we use the parabolic subalgebra $\mathfrak{p}_1 = \C f_1 \oplus \mathfrak{b}$ and
the $\mathfrak{sl}_2$-subalgebra $\mathfrak{sl}_2^{(1)} = \C f_1 \oplus
\C h_1 \oplus \C e_1$ of $\mathfrak{sl}_3$. 
We recall in particular the relations 
\begin{align*}
h (1 \otimes 1_{\lambda}) = (\lambda+\mu \alpha_1)(h) 1 \otimes 1_{\lambda},\; h \in\mathfrak{t}
\end{align*}
and 
\begin{align*}
f_1 e_1 (1 \otimes 1_{\lambda}) = -(\mu+1)(\mu+\lambda(h_1)) 1 \otimes 1_{\lambda}
\end{align*}
for the generating vector
$1 \otimes 1_{\lambda}$ of $\RVerma_{\mathfrak{p}_1, \lambda, \mu}$. 
$\RVerma_{\mathfrak{p}_1,\lambda,\mu}$ has a unique simple quotient
denoted by $\Lsimple_{\mathfrak{p}_1, \lambda, \mu}$. 

\begin{Prop} \label{Prop:subquotsofMasrelaxedVerma}

\begin{enumerate}
\item $M_{\mu_1, \mu_2, \overline{l} \geq 0}\cong \RVerma_{\mathfrak{p}_1, 0, \mu_1}^{\vee}$

\item $M_{\mu_1, \mu_2, \overline{l} \geq 1} \cong \RVerma_{\mathfrak{p}_1, -\rho, \mu_1}^{\vee}$

\item $M_{\mu_1, \mu_2, \overline{l} \geq 2} \cong \RVerma_{\mathfrak{p}_1, -2\alpha_2, \mu_1}
= \Lsimple_{\mathfrak{p}_1, -2\alpha_2, \mu_1}$ and $\RVerma_{\mathfrak{p}_1, -2\alpha_2, \mu_1} \cong \RVerma^{\vee}_{\mathfrak{p}_1, -2\alpha_2, \mu_1}$

\item $M_{\mu_1, \mu_2, \overline{l} = 0}\cong \Lsimple_{\mathfrak{p}_1, 0, \mu_1}$
and $\Lsimple_{\mathfrak{p}_1, 0, \mu_1}= \RVerma_{\mathfrak{p}_1, 0, \mu_1}/
\RVerma_{\mathfrak{p}_1, -\rho, \mu_1}$ and $\Lsimple_{\mathfrak{p}_1, 0, \mu_1} 
\cong \Lsimple^{\vee}_{\mathfrak{p}_1, 0, \mu_1}$

\item $M_{\mu_1, \mu_2, \overline{l}=1} \cong \Lsimple_{\mathfrak{p}_1, -\rho, \mu_1}$
and $\Lsimple_{\mathfrak{p}_1, -\rho, \mu_1}= \RVerma_{\mathfrak{p}_1, -\rho, \mu_1}/
\RVerma_{\mathfrak{p}_1, - 2\alpha_2, \mu_1}$ and $\Lsimple_{\mathfrak{p}_1, -\rho, \mu_1} 
\cong \Lsimple^{\vee}_{\mathfrak{p}_1, -\rho, \mu_1}$
\end{enumerate}

Here $\cong$ denote isomorphisms of $\mathfrak{sl}_3$-modules. 

\end{Prop}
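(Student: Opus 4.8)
The plan is to exploit the universal (Frobenius reciprocity) property of the induced module $\RVerma_{\mathfrak{p}_1,\lambda,\mu}$: an $\mathfrak{sl}_3$-homomorphism out of it is the same as a choice, in the target, of a \emph{relaxed highest weight vector} --- a vector of $\mathfrak{t}$-weight $\lambda+\mu\alpha_1$ that is killed by the nilradical $\C e_2\oplus\C e_{12}$ of $\mathfrak{p}_1$, on which $f_1e_1$ acts by $-(\mu+1)(\mu+\lambda(h_1))$, and which generates under $\mathfrak{sl}_2^{(1)}$ a quotient of $\RVerma^{\mathfrak{sl}_2^{(1)}}_{\lambda(h_1),\mu}$. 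First I would locate such vectors. Inspecting the truncated forms of \eqref{eq:actionofbasisofsl3onwklm} (and of \eqref{eq:actiononetaabc} for the duals), the vectors annihilated by $e_{12}$ have $m=0$, and among the $w_{k,l,0}$ the ones also annihilated by $e_2$ are those at the bottom of the allowed $\overline{l}$-range of the relevant subquotient. This singles out $w_{0,\mu_2,0}\in M_{\overline{l}=0}$, $w_{1,\mu_2+1,0}\in M_{\overline{l}=1}$, $w_{0,\mu_2+2,0}\in M_{\overline{l}\geq2}$ (and $\eta_{0,\mu_2,0}\in M^{\vee}_{\overline{l}\geq0}$, $\eta_{1,\mu_2+1,0}\in M^{\vee}_{\overline{l}\geq1}$). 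A short computation with \eqref{eq:weightofuklm} and the given $f_1e_1$-action shows their weights and $f_1e_1$-eigenvalues are exactly those of the generators of $\RVerma_{\mathfrak{p}_1,0,\mu_1}$, $\RVerma_{\mathfrak{p}_1,-\rho,\mu_1}$, $\RVerma_{\mathfrak{p}_1,-2\alpha_2,\mu_1}$ respectively (here $\lambda(h_1)=0,-1,2$), and that along each bottom row $k\mapsto w_{k,\dots,0}$ the operators $e_1,f_1$ act with coefficients that never vanish because $\mu_1\notin\Z$, so each row is the dense $\mathfrak{sl}_2^{(1)}$-module required by the reciprocity.

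This produces canonical maps out of the three relaxed Verma modules. For $M_{\overline{l}=0}$ and $M_{\overline{l}=1}$, which are simple by \autoref{Prop:Mlgeqmu2+2eqmu2eqmu2+1simple}, the map is nonzero, hence onto, so each is the unique simple quotient: $M_{\overline{l}=0}\cong\Lsimple_{\mathfrak{p}_1,0,\mu_1}$ and $M_{\overline{l}=1}\cong\Lsimple_{\mathfrak{p}_1,-\rho,\mu_1}$. For $M_{\overline{l}\geq2}$ the map $\RVerma_{\mathfrak{p}_1,-2\alpha_2,\mu_1}\to M_{\overline{l}\geq2}$ is again onto (the target is simple), and I would upgrade it to an isomorphism by a formal character count: by PBW $\RVerma_{\mathfrak{p}_1,-2\alpha_2,\mu_1}\cong\C[f_2,f_{12}]\otimes R$ with $R$ the dense $\mathfrak{sl}_2^{(1)}$-module, and counting lattice points shows the weight $-p\alpha_1-q\alpha_2$ has multiplicity $\max(q-1,0)$ on both sides (on the $M$-side because $l=q+\mu_2-m\geq\mu_2+2$ forces $0\leq m\leq q-2$). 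As the common multiplicities are finite, the surjection is bijective on each weight space, hence an isomorphism; in particular $\RVerma_{\mathfrak{p}_1,-2\alpha_2,\mu_1}$ is simple and equals $\Lsimple_{\mathfrak{p}_1,-2\alpha_2,\mu_1}$.

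For the two duals I would argue symmetrically in $M^{\vee}$: the same reciprocity and character count give isomorphisms $\RVerma_{\mathfrak{p}_1,0,\mu_1}\xrightarrow{\cong}M^{\vee}_{\overline{l}\geq0}$ and $\RVerma_{\mathfrak{p}_1,-\rho,\mu_1}\xrightarrow{\cong}M^{\vee}_{\overline{l}\geq1}$, where surjectivity follows by checking that $\eta_{0,\mu_2,0}$, resp.\ $\eta_{1,\mu_2+1,0}$, is cyclic --- using \eqref{eq:actiononetaabc} the operators $e_1,f_1,f_{12},e_{12}$ sweep out each $\overline{l}$-level and $f_2$ (coefficient $\overline{l}+1\neq0$) climbs from one level to the next. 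Dualizing, and using that these weight modules satisfy $(\,\cdot\,)^{\vee\vee}\cong\id$, yields $M_{\overline{l}\geq0}\cong\RVerma^{\vee}_{\mathfrak{p}_1,0,\mu_1}$ and $M_{\overline{l}\geq1}\cong\RVerma^{\vee}_{\mathfrak{p}_1,-\rho,\mu_1}$, which are items (1),(2). The remaining presentations in (3),(4),(5) then follow by transporting known structure: dualizing the submodule chain $M_{\overline{l}=0}\subset M_{\overline{l}\in\{0,1\}}\subset M_{\overline{l}\geq0}$ (whose factors are the three simples just identified) through $M^{\vee}_{\overline{l}\geq0}\cong\RVerma_{\mathfrak{p}_1,0,\mu_1}$ realizes $M^{\vee}_{\overline{l}\geq1}\cong\RVerma_{\mathfrak{p}_1,-\rho,\mu_1}$ as the maximal submodule with quotient $\Lsimple_{\mathfrak{p}_1,0,\mu_1}$, giving $\Lsimple_{\mathfrak{p}_1,0,\mu_1}=\RVerma_{\mathfrak{p}_1,0,\mu_1}/\RVerma_{\mathfrak{p}_1,-\rho,\mu_1}$, and likewise one step down for (5). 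The self-duality statements $\Lsimple\cong\Lsimple^{\vee}$ and $\RVerma_{\mathfrak{p}_1,-2\alpha_2,\mu_1}\cong\RVerma^{\vee}_{\mathfrak{p}_1,-2\alpha_2,\mu_1}$ are exactly the isomorphisms $M_{\overline{l}=0}\cong M^{\vee}_{\overline{l}=0}$, $M_{\overline{l}=1}\cong M^{\vee}_{\overline{l}=1}$, $M_{\overline{l}\geq2}\cong M^{\vee}_{\overline{l}\geq2}$ furnished by \autoref{Prop:structureofMbgeqmu2etc}.

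The main obstacle is the passage from surjection to isomorphism for the three ``large'' modules ($M_{\overline{l}\geq2}$ and the two duals): everything else is either simplicity (\autoref{Prop:Mlgeqmu2+2eqmu2eqmu2+1simple}), an already-proved Hom-computation (\autoref{Prop:structureofMbgeqmu2etc}), or formal dualization. That step rests on the weight-multiplicity comparison, whose only delicate inputs are (i) that $\mu_1\notin\Z$ keeps every bottom row dense (all of $\Z$ present, matching $\C[f_2,f_{12}]\otimes R$ with $R$ free of highest or lowest weight), and (ii) the bookkeeping of exactly which terms of \eqref{eq:actionofbasisofsl3onwklm}--\eqref{eq:actiononetaabc} survive truncation to the index set defining each subquotient, so that the relaxed highest weight vectors really are annihilated by $e_2$ and the cyclicity arguments go through.
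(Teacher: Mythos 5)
Your proposal is correct and takes essentially the same route as the paper's proof: realize the bottom row ($m=0$, lowest allowed $\overline{l}$) as a simple dense $\mathfrak{p}_1$-module, induce via Frobenius reciprocity, prove surjectivity by climbing in $\overline{l}$ with $f_2$ (coefficient $\overline{l}+1\neq 0$), upgrade to an isomorphism by a formal character comparison, and import simplicity and self-duality from \autoref{Prop:Mlgeqmu2+2eqmu2eqmu2+1simple} and \autoref{Prop:structureofMbgeqmu2etc}. The only cosmetic difference is that you map the relaxed Verma modules directly onto $M_{\overline{l}=0}$, $M_{\overline{l}=1}$, $M_{\overline{l}\geq 2}$, whereas the paper constructs all its isomorphisms into the duals $M^{\vee}_{\overline{l}\geq 0}$, $M^{\vee}_{\overline{l}\geq 1}$, $M^{\vee}_{\overline{l}\geq 2}$ and then dualizes and passes to quotients for items (3)--(5).
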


We note that by  (4) and (5) and \eqref{eq:actiononwmu2andmu2+1} we obtain a non-split exact sequence of $\mathfrak{sl}_3$-modules 

\begin{align*}
0 \rightarrow \Lsimple_{\mathfrak{p}_1,0,\mu_1} \rightarrow M_{\mu_1, \mu_2, \overline{l} \in \{0, 1\}} \rightarrow \Lsimple_{\mathfrak{p}_1, -\rho, \mu_1} \rightarrow 0\;.
\end{align*}

\begin{proof}
(1)
By Remark \autoref{Rem:subquotientsofMmu1mu2} $\MVerma^{\vee}_{\overline{l} \geq 0}$ is a quotient of $\MVerma^{\vee}$. 
We construct an isomorphism of $\mathfrak{sl}_3$-modules
$\RVerma_{\mathfrak{p}_1, 0, \mu_1} \xrightarrow{\cong} \MVerma^{\vee}_{\overline{l} \geq 0}$. 
In $M^{\vee}_{\overline{l} \geq 0}$ we have by \eqref{eq:actiononetamu2andmu2+1}
for $k \in \Z$

\begin{align}\label{eq:actiononetaamu20}
\begin{split}
e_1 \eta_{k,\mu_2,0} &= -(\overline{k}-1)\eta_{k-1,\mu_2,0} \\
e_2 \eta_{k,\mu_2,0}&= 0 \\
f_1 \eta_{k,\mu_2,0} &= (\overline{k}+1)\eta_{k+1,\mu_2,0} \\
f_2 \eta_{k,\mu_2,0} &= \eta_{k,\mu_2+1,0}-\eta_{k-1,\mu_2,1} \\
e_{12} \eta_{k,\mu_2,0} &= 0 \\
f_{12} \eta_{k,\mu_2,0} &= -\eta_{k,\mu_2,1}\;.
\end{split}
\end{align}

We recall that the $\mathfrak{t}$-weight of $\eta_{k, l, m}$ is \eqref{eq:weightofuklm},
hence $h \eta_{0, \mu_2, 0} = \mu_1 \alpha_1(h) \eta_{0, \mu_2, 0}$,
$h \in \mathfrak{t}$ .
\eqref{eq:actiononetaamu20} shows that 
we have an isomorphism of simple $\mathfrak{p}_1$-modules
\begin{align*}
\RVerma^{\mathfrak{sl}_2^{(1)}}_{0,\mu_1} \otimes \C_0 \xrightarrow{\cong}\bigoplus_{k \in \Z}\C \eta_{k, \mu_2, 0},\; 1 \otimes 1_0 \mapsto \eta_{0,\mu_2,0}\;. 
\end{align*}
The isomorphism induces a homomorphism of $\mathfrak{sl}_3$-modules
$\phi: \RVerma_{\mathfrak{p}_1, 0, \mu_1}
\rightarrow M^{\vee}_{\overline{l} \geq 0}$.
The image contains $\eta_{k,\mu_2,0}$, $k \in \Z$, and by application of $f_{12}$ also $\eta_{k, \mu_2, m}$ for $k \in \Z, m \geq 0$. 
We now prove by induction on  $\overline{l^{\prime}} \geq 0$ that $\eta_{k, l, m}$ is in the image for $k \in \Z, m \geq 0, 0 \leq \overline{l} \leq \overline{l^{\prime}}$.
The induction start was given. The induction step follows from 
\begin{align*}
(\overline{l^{\prime}}+1)\eta_{k,l^{\prime}+1,m} = f_2 \eta_{k,l^{\prime},m} + (m+1) \frac{(\overline{k}-1)(\overline{k}-2)}{(\overline{k}+\overline{l^{\prime}}-1)(\overline{k}+\overline{l^{\prime}}-2)}\eta_{k-1,l^{\prime},m+1} 
\end{align*}
since $\overline{l^{\prime}}+1 > 0$. Thus $\phi$ surjects. 
Since the formal character of $\RVerma_{\mathfrak{p}_1, 0, \mu_1}$ 
and of $M^{\vee}_{\overline{l} \geq 0}$ is easily computed to be 
$\frac{\sum_{n \in \Z} e^{(n+\mu_1)\alpha_1}}{(1-e^{-\alpha_1-\alpha_2})(1-e^{-\alpha_2})}$,
 $\phi$ is an isomorphism. 
This proves the statement. \\

(2) By Remark \autoref{Rem:subquotientsofMmu1mu2} $\MVerma^{\vee}_{\overline{l} \geq 1}$ is a submodule of $\MVerma^{\vee}$,
and also of $\MVerma^{\vee}_{\overline{l} \geq 0}$. 
In $M^{\vee}_{\overline{l} \geq 1}$ we have by \eqref{eq:actiononetamu2andmu2+1}
for $k \in \Z$
\begin{align*}
\begin{split}
e_1 \eta_{k,\mu_2+1,0} &= -(\overline{k}-2)\eta_{k-1,\mu_2+1,0} \\
e_2 \eta_{k,\mu_2+1,0} &= 0\\
f_1 \eta_{k,\mu_2+1,0} &= (\overline{k}+1)\eta_{k+1,\mu_2+1,0} \\
f_2 \eta_{k,\mu_2+1,0} &= 2 \eta_{k,\mu_2+2,0}-\frac{\overline{k}-2}{\overline{k}} \eta_{k-1,\mu_2+1,1} \\
e_{12} \eta_{k,\mu_2+1,0} &= 0 \\
f_{12} \eta_{k,\mu_2+1,0} &= -\eta_{k,\mu_2+1,1}
\end{split}
\end{align*}
and hence
an isomorphism
of simple $\mathfrak{p}_1$-modules 
\begin{align*}
\RVerma^{\mathfrak{sl}_2^{(1)}}_{-1,\mu_1} \otimes\C_{-\rho}\xrightarrow{\cong}\bigoplus_{k \in \Z} \C \eta_{k, \mu_2+1,0},\; 1 \otimes 1_{-\rho}\mapsto 
\eta_{1,\mu_2+1,0}\;,
\end{align*}
which induces an isomorphism $\RVerma_{\mathfrak{p}_1,-\rho,\mu_1}\xrightarrow{\cong} M^{\vee}_{\overline{l} \geq 1}$ of $\mathfrak{sl}_3$-modules
by the same argument as in (1).\\

(3) By Remark \autoref{Rem:subquotientsofMmu1mu2} $\MVerma^{\vee}_{\overline{l} \geq 2}$ is a submodule of $\MVerma^{\vee}$,
and also of $\MVerma^{\vee}_{\overline{l} \geq 1}$.  In $M^{\vee}_{\overline{l} \geq 2}$ we have by \eqref{eq:actiononetaabc} 
\begin{align*}
\begin{split}
e_1 \eta_{k, \mu_2+2,0} &= -\frac{(\overline{k}-1)(\overline{k}-2)}{\overline{k}} \eta_{k-1,\mu_2+2,0} \\
e_2 \eta_{k, \mu_2+2,0}&= 0\\
f_1 \eta_{k, \mu_2+2,0} &= (\overline{k}+1)\eta_{k+1,\mu_2+2,0} \\
f_2 \eta_{k, \mu_2+2,0} &= 3 \eta_{k, \mu_2+3,0}-\frac{(\overline{k}-1)(\overline{k}-2)}{\overline{k}(\overline{k}+1) }\eta_{k-1, \mu_2+2,1}\\
e_{12} \eta_{k, \mu_2+2,0} & = 0\\
f_{12} \eta_{k, \mu_2+2,0} &= -\eta_{k, \mu_2+2, 1} 
\end{split}
\end{align*}
and hence an isomorphism of simple $\mathfrak{p}_1$-modules
\begin{align*}
\RVerma^{\mathfrak{sl}_2^{(1)}}_{2,\mu_1} \otimes\C_{-2\alpha_2}\xrightarrow{\cong}\bigoplus_{k \in \Z} \C \eta_{k, \mu_2+2,0},\; 1 \otimes 1_{-2\alpha_2} \mapsto 
\eta_{0,\mu_2+2,0}\;,
\end{align*}
which induces an isomorphism 
$\RVerma_{\mathfrak{p}_1,-2\alpha_2,\mu_1}\xrightarrow{\cong} M^{\vee}_{\overline{l} \geq 2}$ by the same argument as in (1). 
Since $M_{\overline{l} \geq 2}$ is
self-dual by Proposition \autoref{Prop:structureofMbgeqmu2etc}(2), we conclude that $\RVerma_{\mathfrak{p}_1, -2\alpha_2, \mu_1} \cong M_{\overline{l} \geq 2}$ and that $\RVerma_{\mathfrak{p}_1, -2\alpha_2, \mu_1}$
is also self-dual. Moreover, by Proposition \autoref{Prop:Mlgeqmu2+2eqmu2eqmu2+1simple}
we have $\RVerma_{\mathfrak{p}_1, -2\alpha_2, \mu_1} = \Lsimple_{\mathfrak{p}_1, -2\alpha_2, \mu_1}$.
\\

(4)
(1) and (2) induce an isomorphism of $\mathfrak{sl}_3$-modules
\begin{align*}
\RVerma_{\mathfrak{p}_1,0,\mu_1} /\RVerma_{\mathfrak{p}_1,-\rho,\mu_1}\xrightarrow{\cong} M^{\vee}_{\overline{l} = 0}\;.
\end{align*}
By Proposition \autoref{Prop:Mlgeqmu2+2eqmu2eqmu2+1simple} 
and Proposition \autoref{Prop:structureofMbgeqmu2etc}(3)
 $M_{\overline{l} = 0}$ is simple and self-dual and hence the unique
simple quotient $\Lsimple_{\mathfrak{p}_1, 0, \mu_1}$ of $\RVerma_{\mathfrak{p}_1, 0, \mu_1}$ is $\Lsimple_{\mathfrak{p}_1, 0, \mu_1} = \RVerma_{\mathfrak{p}_1, 0, \mu_1}/\RVerma_{\mathfrak{p}_1, -\rho, \mu_1}$. The self-duality of $M_{\overline{l}=0}$
implies the one of $\Lsimple_{\mathfrak{p}_1, 0, \mu_1}$. 
 \\
(5) The argument is completely analogous to the one of (4). 
\end{proof}

 \subsection{Structure of $M_{\overline{l} \leq -1}$} \label{Rem:characterofbleqmu2}
 
 \begin{Prop} \label{Prop:Mleqmu2-1} \noindent
\begin{enumerate}
\item $M_{\overline{l} \leq -1}$ is a simple $\mathfrak{sl}_3$-module.
\item $M_{\overline{l} \leq -1} \cong M_{\overline{l} \leq -1}^{\vee}$
\end{enumerate}
\end{Prop}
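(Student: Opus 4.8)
The plan is to treat the two parts separately, following the template of \autoref{Thm:Mmu1mu2simplicitycriterion} and \autoref{Prop:Mlgeqmu2+2eqmu2eqmu2+1simple} for (1), and of \autoref{Thm:M-mu1mu2congMveemu1mu2} for (2). Throughout I will work with the truncation of \eqref{eq:actionofbasisofsl3onwklm} to the index set $\overline{l}\le -1$, and use that under the standing assumptions $\mu_2\in\Z$, $\mu_1+\mu_2\notin\Z$ (so $\mu_1\notin\Z$) the scalars $\overline{k},\overline{k}-1,\overline{k}-2$ are never zero (as $\overline{k}\notin\Z$), the scalars $\overline{l},\overline{l}-1,\overline{l}-2$ are never zero on $\overline{l}\le -1$, and $\overline{k}+\overline{l}-j$, $\overline{k}+\overline{l}+m-j$ are never zero (as $\overline{k}+\overline{l}\notin\Z$). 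Hence every coefficient I rely on below is nonzero.

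For (1), let $S\neq 0$ be a submodule. By \autoref{Rem:simultaneousevecslieinsubmodule} some $w_{k_0,l_0,m_0}\in S$, and applying $e_{12}$ repeatedly gives $w_{k_0,l_0,0}\in S$. At $m=0$ both $e_2$ and $e_1$ act by a single term, $e_2 w_{k,l,0}=-\overline{l}\,w_{k,l-1,0}$ and $e_1 w_{k,l,0}=-\overline{k}\,w_{k-1,l,0}$, so they yield $w_{k,l,0}\in S$ for all $k\le k_0$ and all $\overline{l}\le\overline{l_0}$; then $f_{12}$ fills in $w_{k,l,m}\in S$ for $k\le k_0,\ \overline{l}\le\overline{l_0},\ m\ge 0$. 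Next I enlarge the $k$-range: since $w_{k_0,l-1,m+1}\in S$ already, the two-term relation $f_1 w_{k_0,l,m}=C\,w_{k_0+1,l,m}-\overline{l}\,w_{k_0,l-1,m+1}$ with $C\neq 0$ forces $w_{k_0+1,l,m}\in S$, and iterating gives $w_{k,l,m}\in S$ for all $k\in\Z,\ \overline{l}\le\overline{l_0},\ m\ge 0$. Finally I climb in $l$ via $f_2 w_{k,l,m}=C'\,w_{k,l+1,m}+\overline{k}\,w_{k-1,l,m+1}$: for $\overline{l}\le -2$ the leading coefficient $C'$ is nonzero and $w_{k-1,l,m+1}\in S$, so $w_{k,l+1,m}\in S$; stepping $\overline{l}$ up one at a time carries us exactly to the top slice $\overline{l}=-1$, giving $S=M_{\overline{l}\le -1}$.

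For (2), I will exhibit an explicit rescaling isomorphism $\phi(w_{k,l,m})=x_{k,l,m}\,\eta_{k,l,m}$. Matching the single-term ``leading'' relations yields the ratios $x_{k,l,m}/x_{k,l,m-1}=-m/(\overline{k}+\overline{l}+m-1)$ (from $e_{12}$), $x_{k,l,m}/x_{k-1,l,m}=\overline{k}(\overline{k}+\overline{l}-1)(\overline{k}+\overline{l}-2)/[(\overline{k}-1)(\overline{k}-2)(\overline{k}+\overline{l}+m-1)]$ (from $e_1$), and the analogous ratio in $l$ (from $e_2$). All these factors are nonzero, so fixing $x_{0,\mu_2-1,0}=1$ determines a family $x_{k,l,m}\in\C^{\times}$, provided the recursion is consistent; I will verify the commuting (consistency) conditions of these three ratios, which hold by direct cancellation. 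Since every $x_{k,l,m}\neq 0$, the map $\phi$ is automatically a linear bijection, so it remains only to show that $\phi$ intertwines the $\mathfrak{sl}_3$-action.

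Here the key simplification is the involution $\tau$ defining the dual. For a transition $X w_\beta=c\,w_\gamma+\cdots$ the intertwining condition reads $c\,x_\gamma=-x_\beta\,c'$, where $c'$ is the coefficient of $w_\beta$ in $\tau(X)w_\gamma$; since $\tau$ swaps $e_i\leftrightarrow f_i$ (up to sign) and each $f_i$-transition $\beta\to\gamma$ is precisely the reverse of an $e_i$-transition $\gamma\to\beta$, the conditions imposed by $f_1,f_2,f_{12}$ coincide with those already imposed by $e_1,e_2,e_{12}$. Thus it suffices to check that $\phi$ intertwines $e_1,e_2,e_{12}$, i.e. that the three ratios above together with the two ``second-term'' conditions (the $w_{k,l+1,m-1}$-term of $e_1$ and the $w_{k+1,l,m-1}$-term of $e_2$) are mutually compatible. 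I expect this compatibility to be the main computational obstacle, but it reduces to the same kind of raising-factorial identity verified in \autoref{Thm:M-mu1mu2congMveemu1mu2}, and I have checked the $e_1$ second-term case explicitly. Granting it, $\phi$ intertwines all generators and, being a linear bijection, is an isomorphism of $\mathfrak{sl}_3$-modules; alternatively, $\phi$ is a nonzero homomorphism between the simple modules of part (1), hence an isomorphism.
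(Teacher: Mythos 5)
Your proof of (1) is, apart from swapping the order of the $e_1$ and $e_2$ sweeps, the paper's own argument: reduce to some $w_{k_0,l_0,m_0}\in S$ via Remark \autoref{Rem:simultaneousevecslieinsubmodule}, clear $m$ with $e_{12}$, descend in $k$ and $l$ with the single-term actions of $e_1,e_2$ at $m=0$, refill $m$ with $f_{12}$, then ascend in $k$ with $f_1$ and in $l$ with $f_2$, the two-term formulas being usable because their second terms already lie in $S$ and the leading coefficients are nonzero under $\mu_1\notin\Z$, $\mu_1+\mu_2\notin\Z$, $\overline{l}\le -2$. That part is complete and correct.

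In (2) your overall strategy --- a diagonal map pinned down by the $e_1,e_2,e_{12}$ leading-term ratios --- is again the paper's, but you add one genuine idea and leave one genuine hole. The idea: since $(X\eta)(v)=-\eta(\tau(X)v)$ and $\tau$ is an involution exchanging $e_i$ with $f_i$ (up to the sign on $e_{12},f_{12}$, which cancels), the intertwining equation for $X$ on a transition $\alpha\to\gamma$ is literally the same equation as the one for $\tau(X)$ on $\gamma\to\alpha$; hence the $f_1,f_2,f_{12}$ conditions follow from the $e_1,e_2,e_{12}$ ones, and the $\mathfrak{t}$-conditions are trivial because $\eta_{k,l,m}$ and $w_{k,l,m}$ have equal weights. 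This is correct and worth stating; it is implicit in the proof of Theorem \autoref{Thm:M-mu1mu2congMveemu1mu2} but never spelled out there. The hole: the path-independence of your three-ratio recursion and the two second-term conditions are exactly where the content of the statement sits, and you ``grant'' them rather than prove them (you claim only the $e_1$ case). The paper's proof consists precisely of disposing of this point by exhibiting the closed-form solution $x_{k,l,m}$, after which the verification is a finite mechanical check. Note also that your closing alternative via Schur's lemma does not circumvent this: to produce a nonzero homomorphism at all you must already know the intertwining identities hold, which is the computation in question. To complete your argument, either carry out the remaining checks (the consistency conditions are short cancellations, and the $e_2$ second-term identity follows from the $e_1$ one by the $1\leftrightarrow 2$, $k\leftrightarrow l$ symmetry of \eqref{eq:actionofbasisofsl3onwklm} and \eqref{eq:actiononetaabc}), or solve your recursion explicitly, which reproduces the formula displayed in the paper's proof of Proposition \autoref{Prop:Mleqmu2-1}.
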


\begin{proof}
(1) By Remark \autoref{Rem:simultaneousevecslieinsubmodule}
it suffices to show that $w_{k_0, l_0, m_0}$ generates $M_{\overline{l} \leq -1}$
for given $k_0 \in \Z, \overline{l_0} \leq -1, m_0 \geq 0$.
We look at \eqref{eq:actionofbasisofsl3onwklm}. Acting with $e_{12}$ we find $w_{k_0, l_0, 0}\in S$. Acting with $e_1$ we find $w_{k, l_0, 0}\in S$ for $k \leq k_0$. 
Acting then with $e_2$ we find $w_{k,l,0} \in S$ for $k \leq k_0, l \leq l_0$.
Acting then with $f_{12}$ we find $w_{k,l,m} \in S$ for $k \leq k_0, l \leq l_0, m \geq 0$.
Acting then with $f_1$ we find $w_{k,l,m}\in S$ for $k \in \Z, l \leq l_0, m \geq 0$. 
Acting finally with $f_2$ we find $w_{k,l,m}\in S$ for $k \in \Z, \overline{l} \leq -1, m \geq 0$. 
\\

(2) Let $x_{0, \mu_2-1, 0}\in \C^{\times}$. One checks
that $\phi: M^{\vee}_{\overline{l} \leq -1} \rightarrow
M_{\overline{l} \leq -1}$ is an isomorphism of $\mathfrak{sl}_3$-modules
if we set $\phi(\eta_{k,l,m})=x_{k,l,m} w_{k,l,m}$ with
\begin{align*}
x_{k,l,m} = \frac{(-1)^m m!}{(\overline{k}+\overline{l})^{(m)}}\; \; \left(-\frac{\overline{k}}{\mu_1}\right) x_{0, \mu_2-1,0} \begin{cases}\frac{(\overline{l}-\mu_1-1)^{(k)}}{(-\mu_1-1^{(k)}} & k \geq 0 \\ \frac{(-\mu_1-2)_{(-k)}}{(\overline{l}-\mu_1-2)_{(-k)}} & k < 0 \end{cases} \times \begin{cases} 
1 & \overline{l} = -1 \\-\frac{\overline{l}(3 \cdots (-\overline{l}+1))}{(\mu_1+3)^{(-\overline{l}-1)}}
 & \overline{l} \leq -2\end{cases}\;.
\end{align*} 
\end{proof}

\section*{Acknowledgements}
We are grateful to B. Feigin, G. Felder, and M. Finkelberg for discussions related to this article. 

\bibliographystyle{alpha}
\bibliography{references}

\begin{thebibliography}{BB81}

\bibitem[BB81]{BB81}
A.~Beilinson and J.~Bernstein.
\newblock Localisation de $\mathfrak{g}$-modules.
\newblock {\em C. R. Acad. Sci. Paris S\'er. I Math.}, 292(1):15--18, 1981.

\bibitem[Ber]{Ber}
J.~Bernstein.
\newblock Algebraic theory of {D}-modules.
\newblock available at
  \href{http://www.math.uchicago.edu/~mitya/langlands/Bernstein/Bernstein-dmod.ps}{\nolinkurl{http://www.math.uchicago.edu/~mitya/langlands.html}}.
\newblock unpublished.

\bibitem[Eic16]{Eic16b}
C.~Eicher.
\newblock Relaxed highest weight modules from $\mathcal{D}$-modules on the
  {Kashiwara} flag scheme.
\newblock arXiv:1607.06342 [math.RT], 2016.

\end{thebibliography}
\end{document}